\renewcommand\labelenumi{(\roman{enumi})}
\renewcommand\theenumi\labelenumi
\newcommand{\scal}[2]{\langle #1,#2\rangle}
\newcommand{\rr}[1]{\mathbf R^{#1}}
\newcommand{\zz}[1]{\mathbf Z^{#1}}
\newcommand{\nm}[2]{\Vert #1\Vert _{#2}}
\newcommand{\Nm}[2]{\big \Vert #1\big \Vert _{#2}}
\newcommand{\NM}[2]{\left \Vert #1\right \Vert _{#2}}
\newcommand{\nmm}[1]{\Vert #1\Vert }
\newcommand{\op}{\operatorname{Op}}
\newcommand{\sets}[2]{\{ \, #1\, ;\, #2\, \} }
\newcommand{\ep}{\varepsilon}
\newcommand{\fy}{\varphi}
\newcommand{\cdo}{\, \! \cdot \, \! }
\newcommand{\supp}{\operatorname{supp}}
\newcommand{\wpr}{{\text{\footnotesize $\#$}}}
\newcommand{\eabs}[1]{\langle #1\rangle}
\newcommand{\vrum}{\vspace{0.1cm}}
\newcommand{\WL}{W\!\! L}
\newcommand{\Sh}{\operatorname{S\hspace{-0.25mm}h}}
\newcommand{\SG}{\operatorname{S}}
\newcommand{\maclB}{\mathcal B}
\newcommand{\maclH}{\mathcal H}
\newcommand{\maclK}{\mathcal K}
\newcommand{\maclL}{\mathcal L}
\newcommand{\maclU}{\mathcal U}
\newcommand{\maclV}{\mathcal V}
\newcommand{\mascC}{\mathscr C}
\newcommand{\mascF}{\mathscr F}
\newcommand{\mascI}{\mathscr I}
\newcommand{\mascS}{\mathscr S}
\numberwithin{equation}{section} 
\newtheorem{thm}{Theorem}
\numberwithin{thm}{section}
\newcommand{\rubrik}{}
\newtheorem{prop}[thm]{Proposition}
\newtheorem{cor}[thm]{Corollary}
\newtheorem{lemma}[thm]{Lemma}
\theoremstyle{definition}
\newtheorem{defn}[thm]{Definition}
\newtheorem{rem}[thm]{Remark}
\theoremstyle{remark}
\title{Quasi-Banach Schatten-von Neumann
properties in Weyl-Hörmander calculus}
\author{Matteo Bonino}
\address{Dipartimento di Matematica ``G. Peano'', Universit\'a
degli Studi di Torino}
\email{matteo.bonino@unito.it}
\author{Sandro Coriasco}
\address{Dipartimento di Matematica ``G. Peano'', Universit\'a
degli Studi di Torino}
\email{sandro.coriasco@unito.it}
\author{Albin Petersson}
\address{Department of Mathematics,
Linn{\ae}us University, Sweden}
\email{albin.petersson@lnu.se}
\author{Joachim Toft}
\address{Department of Mathematics,
Linn{\ae}us University, Sweden}
\email{joachim.toft@lnu.se}
\begin{document}

\begin{abstract}
%%%%%%%%%%%%%
We study structural properties of
$\WL_{g,\theta} ^{q,p}$, which are
Wiener-Lebesgue spaces
with respect to a slowly varying metric $g$ and 
with parameters $p,q\in(0,\infty ]$, 
$\theta\in\mathbb{R}$. For $p\in (0,1]$,
we deduce Schatten-$p$ 
properties for pseudo-differential
operators whose symbols, together with
their derivatives, obey suitable
$\WL_{g,\theta} ^{q,p}$-boundedness conditions.
Especially, we perform such investigations for the
Weyl-H{\"o}rmander calculus.
% Finally, we apply our results
% to obtain Schatten-von Neumann properties for
% pseudo-differential operators with symbols 
% in SG or Shubin global classes.
% \\[1ex]
Finally, we apply our results to
global-type SG and Shubin
pseudo-differential operators.
% \\[2ex]
% We study structural properties of Wiener-Lebesgue 
% spaces 
% $\WL_{g,\theta} ^{q,p}$,
% with respect to a slowly varying metric $g$, and 
% parameters $p,q\in(0,\infty ]$, 
% $\theta\in\mathbb{R}$. We prove Schatten-von Neumann 
% properties for 
% operators in the Weyl-H\"ormander and more general 
% pseudo-
% differential calculi,  
% in such quasi-Banach setting. Finally, we apply our 
% results to obtain 
% Schatten-von Neumann properties for pseudo-differential 
% operators with symbols 
% in the SG or in the Shubin global classes.
%%%%%%%%%%%%% 
\end{abstract}

\keywords{Schatten-von Neumann properties,
quasi-Banach spaces, pseudo-differential calculi}

\subjclass[2020]{primary: 35S05, 47B10, 46A16, 
secondary: 42B35, 47L15}

\maketitle

%%%%%%%%%%%%%%%%%%%%%%%%%%%%%%%%
\section{Introduction}\label{sec0}
%%%%%%%%%%%%%%%%%%%%%%%%%%%%%%%%

% {\color{red} Let $0<p \leq 1$, and denote by $\mathscr I_p$ 
% the class of Shattern p-operators on $L^2$.
% We aim at proving:
% \begin{enumerate}
% \item $m \in L^p \Rightarrow \op ^w(S(m,g))
% \subseteq \mathscr I_p$;
% \item $ \op ^w(S(m,g)) \subseteq
% \mathscr I_p \Rightarrow m \in L^p $;
% \item Suppose that $ h_g^N m \in \WL^{1,p}$ and
% $a \in S(m,g)$. Then
% $a \in \WL ^{1,p} \Rightarrow
% \op ^w(a) \in \mathscr I_p$;
% \item Suppose that $ h_g^N m \in \WL^{1,p}$ and
% $a \in S(m,g)$. Then
% $\op ^w(a) \in \mathscr I_p \Rightarrow a \in \WL ^{1,p} 
% $.
% \end{enumerate}}

The theory of pseudo-differential
operators naturally arises in e.{\,}g.
partial differential equations, statistics,
quantum mechanics, and signal processing.
A pseudo-differential calculus is a rule which
associates a suitable function $a(x,\xi )$,
defined on the phase space
$W=V\times V'\asymp \rr {2d}$,
to a linear operator $\op (a)$. (See
\cite{Ho3} or Section \ref{sec1}
for notations.) The function
$a(x,\xi )$ is called the symbol of $\op (a)$.
The partial differential operators are obtained
by choosing the symbols to be polynomials in
the momentum variable $\xi \in V'$.
Hence, pseudo-differential operators are a
generalization of the concept of differential
operators.

\par

The Weyl quantization $a\mapsto \op ^w(a)$
is unique because it
is the only pseudo-differential calculus
which is invariant under affine symplectic
transformations. This property is fundamental
in quantum mechanics, making the Weyl
quantization of special interest in several
fields. This symplectic structure also facilitates 
calculations which are otherwise more cumbersome. Therefore,
the Weyl calculus naturally lends itself to deeper analysis. 

\par

An important question in the theory pseudo-differential 
operators is
to find suitable conditions on the symbol classes
in order to guarantee $L^2$-continuity and compactness
properties of the corresponding operators. More
detailed studies on compactness are then possible
in the framework of Schatten-von Neumann classes,
a family $\{\mascI_p\}_{p\in (0,\infty ]}$ of
operator spaces characterized by the decay
properties of their singular values.
%and parametrized by $0<p\le\infty$.

\par

% % How much detail is needed?
% Schatten-von Neumann operators are characterized by the 
% decay of their singular values. \textcolor{blue}
% {Something about Schatten p history, trace class, 
% Hilbert 
% Schmidt, etc.} We denote the space of Schatten-von 
% Neumann operators of order $p$ acting on $L^2$ by 
% $\mascI_p$. The parameter $p$ determines the strength 
% of 
% the condition on the decay of the sequence of singular 
% values. Such operators are of interest  \textcolor{blue}
% {because ...}

\par

% \pagebreak

% In the paper, we find conditions such that
% pseudo-differential operators are Schatten $p$-
% operators 
% on $L^2$ for $0<p\leq 1$.

% \vspace{0.5cm}

In the paper, we find sufficient conditions on
symbols in the H{\"o}rmander class $S(m,g)$
in order for corresponding
pseudo-differential operators to be
Schatten operators of degree $0<p\leq 1$ on $L^2$. 

% \vspace{0.5cm}

\par

% Such investigations have been done before.
% Notably, in
In the case that $1\le p \le \infty$, 
investigations related to ours can be found in 
\cite{Tof6,BuNi,BuTo0}.
% 
% 
% 
% It is then assumed that the weight function $m$ is in 
% $L^p$.
% 
%\\[2ex]
% It is then assumed that the weight function $m$ fulfills
% different types of $L^p$ boundedness conditions.
% To obtain analogous results in the case that 
% $0<p\leq 1$, we introduce Wiener-Lebesgue spaces 
% $\WL_{g}^{1,p}$ with respect to a slowly varying metric 
% $g$, replacing the role of $L^p$. Since there are no 
% prior investigations of these spaces,
% a significant part 
% of the paper is 
% devoted to their study.
% \\[2ex]
It is then assumed that the weight function $m$ fulfills
different types of $L^p$ boundedness conditions. More
precisely, suppose that $g$ is strongly feasible on $W$,
$p\in [1,\infty ]$ and $m$ is $g$-continuous
and $(\sigma ,g)$-temperate.
In \cite{Tof6} it is then proved that
\begin{align}
\label{Eq:IntrSchattResult1}
m\in L^p
\quad &\iff \quad
\op ^w(a) \in \mascI _p,
\quad \text{when}\quad
a\in S(m,g),
\intertext{and in \cite{BuTo0}, 
\eqref{Eq:IntrSchattResult1}
it is proved that}
\label{Eq:IntrSchattResult2}
a\in L^p
\quad &\iff \quad
\op ^w(a) \in \mascI _p,
\quad \text{when}\quad
h_g^{k/2}m\in L^p,\ a\in S(m,g).
\end{align}
%%
% %%
% \begin{align}\label{Eq:IntrSchattResult1}
% m\in L^p
% \quad &\iff \quad
% \op ^w(S(m,g)) \subseteq \mascI _p,
% \intertext{and in \cite{BuTo0}, 
% \eqref{Eq:IntrSchattResult1}
% was improved into}
% \label{Eq:IntrSchattResult2}
% a\in L^p
% \quad &\iff \quad
% \op ^w(a) \in \mascI _p,
% \quad \text{when}\quad
% h_g^{k/2}m\in L^p,\ a\in S(m,g).
% \end{align}
% %%
We observe that
\eqref{Eq:IntrSchattResult1} deals with
Schatten-von Neumann properties for 
the whole symbol class $S(m,g)$, while
\eqref{Eq:IntrSchattResult2} is focused
on more individual symbols. In the case
$p\in (0,1]$, the right implication 
\begin{equation}\label{Eq:IntrSchattResult1R}
m\in L^p
\quad \Longrightarrow \quad
\op ^w(a) \in \mascI _p,
\quad \text{when}\quad
a\in S(m,g),
\end{equation}
in \eqref{Eq:IntrSchattResult1} was proved in
\cite{Tof14}. We also remark
that the right implication 
\begin{equation}\label{Eq:IntrSchattResult2R}
a\in L^p
\quad \Longrightarrow \quad
\op ^w(a) \in \mascI _p,
\quad \text{when}\quad
h_g^{k/2}m\in L^p,\ a\in S(m,g).
\end{equation}
in \eqref{Eq:IntrSchattResult2} was deduced
already in \cite{Ho2} in the
case $p=1$, and in \cite{Tof6} for general
$p\in [1,\infty]$. For $p\le 2$, it
suffices to assume that $g$ should be feasible
instead of strongly feasible, in order for
\eqref{Eq:IntrSchattResult1R}
and
\eqref{Eq:IntrSchattResult2R} to hold.

\par

In the paper, we improve \eqref{Eq:IntrSchattResult1R} 
and obtain a version of \eqref{Eq:IntrSchattResult2R} in 
the case $p\in(0,1]$, by introducing
Wiener-Lebesgue spaces 
$\WL_{g}^{q,p}$ with respect to a slowly varying metric 
$g$. By replacing $L^p$ with $\WL ^{1,p}_g$ in
\eqref{Eq:IntrSchattResult1R}
and
\eqref{Eq:IntrSchattResult2R}, we obtain
stronger results than in previous investigations,
because we neither need to assume that $m$ is
$g$-continuous nor $(\sigma ,g)$-temperate.
At first glance, it might seem that we are more 
restrictive since $\WL ^{1,p}_g$ is contained
in $L^p$ when $p\in (0,1]$. However, if
in addition $m$ is $g$-continuous, which is the
case in \cite{Tof14}, then $m\in L^p$,
if and only if $m\in \WL ^{1,p}_g$. (See
Lemma \ref{Lemma:WeightNormEquiv}.)
Since there are no prior investigations of
$\WL^{q,p}_g$-spaces, a significant part of
the paper is devoted to their study.

\medspace

The paper is organized as follows.
In Section \ref{sec1}, we recall
definitions and some facts on symplectic vector spaces,
pseudo-differential operators, the symbol class 
$S(m,g)$, and Schatten-von Neumannn classes.
Here, we also introduce the Wiener-Lebesgue spaces
$\WL ^{q,p}_{g,\theta}$.
% 
% \medspace
%
% In Section \ref{sec1} we recall some basic 
% preliminary definitions, as well as introduce the 
% previously mentioned Wiener-Lebesgue spaces with 
% respect 
% to a slowly varying metric $g$, denoted $\WL 
% ^{q,p}_{g,\theta}$.
%
% \medspace
%

\par

In Section \ref{sec2}, we examine the 
structure of the $\WL ^{q,p}_{g}$-spaces, or even
more general $\WL ^{q,p}_{g,\theta}$-spaces.
We deduce some invariance properties. We also
show that $\WL ^{q,p}_{g}$ is essentially
increasing with respect to the slowly varying metric $g$.

\par

In Section \ref{sec3}, we employ the results from Section \ref{sec2} to draw conclusions about Schatten-$p$
properties of pseudo-differential operators on $L^2$. 
Section \ref{sec31} is devoted to the standard 
Hörmander-Weyl calculus and in Section \ref{sec32} we 
restrict ourselves to split metrics $g$ in order to find 
analogous results for more general pseudo-differential 
calculi.

\par

Lastly, in Section \ref{sec4} we apply
our results to pseudo-differential operators
with SG symbols or Shubin symbols.

\par

\section*{Acknowledgement}
The first and second authors have been partially 
supported by INdAM - GNAMPA Project 
CUP\_E53C22001930001 (Sc. Resp. S. Coriasco).
The first and second authors gratefully
acknowledge also the support by the
Department of Mathematics, Linn{\ae}us 
University, V{\"a}xj{\"o}, Sweden,
during their stay in A.Y. 
2023/2024, when most of the results presented in 
this paper have been obtained.
The third and the forth authors were supported by 
Vetenskapsr{\aa}det (Swedish Science Council)
within the project 2019-04890.

\par

%%%%%%%%%%%%%%%%%%%%%%%%%%%%%%%%
\section{Preliminaries}\label{sec1}
%%%%%%%%%%%%%%%%%%%%%%%%%%%%%%%%

\par

In this section we recall some facts on
symplectic vector spaces and the symplectic
Fourier transform. Thereafter we focus on
the H{\"o}rmander symbol classes $S(m,g)$,
pseodo-differential operators and
Schatten-von Neumann operators,
and recall some basic facts for them.
In the last part of the section we
introduce Wiener-Lebesgue spaces
$W^{q,p}_{g,\theta}(W)$, and discuss some
basic properties.

\par

\subsection{Integrations on real
vector spaces}\label{sec11}

\par

Let $V$ be a real vector space of dimension
$d$, with basis $e_1,\dots ,e_d$, and let
$V'$ be its dual, with dual basis $\ep _1,\dots ,\ep _d$.
In particular,
$$
\scal {e_j}{\ep _k}=\delta _{jk},
$$
where $\scal \cdo \cdo = {\scal \cdo \cdo}_{V,V'}$
is the dual form between $V$ and $V'$. For any
$f\in L^1(V)$, we put
$$
\int _Vf\, dx
\equiv
\idotsint _{\rr d}
f(x_1e_1+\cdots +x_de_d)\, dx_1\cdots dx_d.
$$

\par

For any $f\in L^1(V)$, we define the
Fourier transform by
$$
(\mascF f)(\xi ) = \widehat f(\xi )
\equiv
(2\pi )^{-\frac d2}
\int _V f(x)e^{-i\scal x\xi}\, dx,
\qquad
\xi \in V'.
$$
It follows that $\mascF$ restricts to a homeomorphism
from $\mascS (V)$ to $\mascS (V')$, which
in turn is uniquely extendable to a homeomorphism 
from $\mascS '(V)$ to $\mascS '(V')$, and to
a unitary map from $L^2(V)$ to $L^2(V')$.

\par

\subsection{Symplectic vector spaces}\label{sec12}
The real
vector space $W$ of dimension $2d<\infty$ is called
\emph{symplectic} with symplectic form $\sigma$,
if $\sigma$ is a non-degenerate anti-symmetric bilinear
form on $W$, i.{\,}e. $\sigma
(X,Y)=-\sigma (Y,X)$ for every $X,Y\in W$, and if $\sigma (X,Y)=0$ for
every $Y\in W$, then $X=0$. The coordinates $X=(x,\xi )$ are called
symplectic if the corresponding basis
$e_1,\dots ,e_d,\ep _1,\dots ,\ep _d$
is symplectic, i.{\,}e. it satisfies
$$
\sigma (e_j,e_k)=\sigma (\ep _j,\ep _k)=0,\quad \sigma
(e_j,\ep _k)=-\delta _{jk},\quad  j,k=1,\dots
,d.
$$
It follows that $W$ in a canonical way
may be identified with $\rr
d\oplus \rr d=\rr {2d}$, and that $\sigma$ is given by
\begin{equation}\label{sympform}
\sigma (X,Y)=\scal y\xi -\scal x\eta ,\quad
X=(x,\xi )\in W, \quad Y=(y,\eta )\in W.
\end{equation}
Here $\scal \cdo \cdo$ is the usual scalar
product on $\rr d$. Moreover, let $\pi _1$ and
$\pi _2$ be the projections $\pi _1(x,\xi )=x$
and $\pi _2(x,\xi )=\xi $ respectively, and set
$V=\pi _1W$ and $V'=\pi _2W$, which are identified
with $\sets {(x,0)\in
W}{x\in V}$ and $\sets {(0,\xi )\in W}{\xi \in V'}$ respectively. Then
the dual space of $V$ may be identified with $V'$ through the
symplectic form $\sigma$, and $W$ agrees with the cotangent
bundle (or phase space) $T^*V=V\oplus V'$.

\par

On the other hand, if $V$ is a vector space of dimension $d<\infty$
with dual space $V'$ and duality $\scal \cdo \cdo$, then $W=V\oplus
V'$ is a symplectic vector space with symplectic form given by
\eqref{sympform}.

\par

A linear map $T$ on $W$ is called symplectic if
$\sigma (TX,TY)=\sigma
(X,Y)$ for every $X,Y\in W$. For each pairs of symplectic
bases $e_1,\dots ,e_d,\ep _1,\dots ,\ep _d$
and
$\tilde e_{1},\dots ,\tilde e_{d},
\tilde \ep _{1},\dots ,\tilde \ep _{d}$,
there is a unique linear symplectic
map $T$ such that $Te_j=\tilde e_{j}$ and
$T\ep _j=\tilde \ep _{j}$
for every $j=1,\dots ,d$.
On the other hand, if $T$ is linear and
symplectic and $e_1,\dots ,e_d,\ep _1,\dots ,\ep _d$ is a 
symplectic basis, then
$Te_{1},\dots ,T\ep _d$ is also a symplectic
basis. Consequently, there is a one-to-one relation between
linear symplectic mappings, and representations of $W$ as 
cotangent boundles $T^*V$. We refer to
\cite{Ho3} for more facts about symplectic
vector spaces.

\par

The symplectic volume form is defined by $dX =\sigma
^n/d{!\,}$, and if $U\subseteq W$ is measurable, then $|U|$ 
denotes
the measure of $U$ with respect to $dX$. This implies that
$$
\int _Wa(X)\, dX =\idotsint _{\! \rr {d}\oplus \rr d}
a(x_1e _1+\cdots +\xi _d \ep_{d})\, dx_1\cdots d\xi _d 
$$
is independent of the choice
of the symplectic coordinates $X=(x,\xi
)$ when $f\in L^1(W)$. Consequently, $\mathscr D'(W)$ and
its usual subspaces only depend on
$\sigma$ and are independent of the choice of symplectic
coordinates.

\par

The symplectic Fourier transform $\mathscr F_\sigma$ on $\mathscr
S(W)$ is defined by the formula
$$
\mathscr F_\sigma a(X)=\widehat
a(X) \equiv \pi ^{-n}\int _Wa(Y)e^{2i\sigma (X,Y)}\, dY,
$$
when $a\in \mathscr S(W)$. Then $\mathscr F_\sigma$ is a homeomorphism
on $\mathscr S(W)$ which extends to a homeomorphism on $\mathscr
S'(W)$, and to a unitary operator on $L^2(W)$.
Moreover, $\mathscr
F_\sigma ^2$ is the identity operator.
Note also that $\mathscr
F_\sigma$ is defined without any reference to symplectic
coordinates. By straight-forward computations it follows that
$$
\mathscr F_\sigma (a*b)(X) = \pi ^d\widehat a(X)\widehat b(X),\quad
\mathscr F_\sigma (ab)(X)= \pi ^{-d}(\widehat a *\widehat b)(X),
$$
when $a\in \mathscr S'(W)$, $b\in \mathscr S(W)$, and $*$ denotes the
usual convolution. We refer to
\cite{Fo,Tof3} for more facts
about the symplectic Fourier transform.

\medspace

% Next some notations are presented. Let $\mathscr B_1$ and
% $\mathscr B_2$ be topological spaces. Then $\mathscr
% B_1\hookrightarrow \mathscr B_2$ means that $\mathscr B_1$ is
% continuously embedded in $\mathscr B_2$. In the case $\mathscr
% B_1$ and $\mathscr B_2$ are Frech\'et spaces, $\mathscr
% B_1\hookrightarrow \mathscr B_2$ is the same as $\mathscr
% B_1\subseteq \mathscr B_2$, and that for each semi-norm $\nm
% {\cdot}{\mathscr B_2}$ on $\mathscr B_2$, there is a semi-norm
% $\nm {\cdot}{\mathscr B_1}$ on $\mathscr B_1$ such that $\nm
% x{\mathscr B_2}\le C \nm x{\mathscr B_1}$ for every
% $x\in \mathscr
% B_1$. For conveniency we permit $\nm x{\mathscr B_1}$ to 
% assume the value $+\infty$, for certain elements $x\in 
% \mathscr B_2\setminus \mathscr B_1$.

\par

% The dual form between different functions and
% distribution spaces, and their duals is denoted
% by $\scal \cdo \cdo$.

% \medspace

\subsection{Symbol classes and feasible metrics}\label{sec13}

\par

Next we recall the definition of the symbol classes.
(See \cite{Ho1,Ho2,Ho3}.) Let $N\ge 0$ be an integer,
$V$ be a finite-dimensional vector 
space, $a$ belongs to $\mascC ^N(V)$, the set of
continuously differentiable functions of order $N$,
$g$ be an arbitrary Riemannian 
metric on $V$, and let $0< m\in L^\infty _{loc}(V)$.
For each $k=0,\dots ,N$, let
\begin{equation}\label{e1.1}
|a|^g_k(x)
\equiv
\sup |a^{(k)}(x;y_1,\dots ,y_k)|,
\end{equation}
where the supremum is taken over all
$y_1,\dots ,y_k\in V$ such that
$g_x(y_j)\le 1$ for every $j=1,\dots ,k$. Also set
\begin{equation}\label{e1.2}
\nm a{N,m}^g\equiv \sum _{k=0}^N\sup _{x\in V}\big
(|a|^g_k(x)/m(x) \big ).
\end{equation}

\par

We let $S_N(m,g)$ be the set of all
$a\in \mascC ^N(V)$ such that $\nm a{N,m}^g$
is finite. Also set
\begin{align*}
S(m,g) &= S_\infty {(m,g)}
\equiv
\bigcap _{N\ge 0}S_N(m,g).
% \\[1ex]
% S_N{(m,g)} &= S_N{(V,m,g)}\quad \text{and}\quad S{(m,g)} =
% S{(V,m,g)}.
\end{align*}
It follows that $S_N(m,g)$ is a Banach space and
$S(m,g)$ is a Fr{\'e}chet space.

\par

In our applications, $V$ here above agrees with the
symplectic vector space $W$, and
$S_N(m, g)$ when $0\le N\le \infty$ are the symbol
classes for the Weyl operators.

\par

Next we recall some properties for the weight
function $m$ and the
metric $g$ on $W$. It follows from Section
18.6 in \cite{Ho3} that for each fixed $X\in W$,
there are symplectic coordinates $Z=(z,\zeta )$
which diagonalize $g_X$, i.{\,}e. $g_X$ takes the form
\begin{equation}\label{e1.0}
\begin{gathered}
g_X(Z) =\sum _{j=1}^d\lambda _j(X)(z_j^2+\zeta _j^2),\quad Z=(z,\zeta
)\in W,
\end{gathered}
\end{equation}
where
\begin{equation}\label{e1.00}
\lambda _1(X)\ge \lambda _2(X)\ge \cdots \ge \lambda _d(X)>0
\end{equation}
only depend on $g_X$ and are independent of the choice of symplectic coordinates which
diagonalize $g_X$.

\par

The \emph{dual metric} $g^\sigma$
and \emph{Planck's function} $h_g$ with respect
to $g$ and the
symplectic form $\sigma$ are defined by
$$
g^\sigma _X(Z)\equiv \sup _{Y\neq 0}\Big ( \frac {\displaystyle{\sigma
(Y,Z)^2}}{\displaystyle{g_X(Y)}}\Big ) \quad \text{and}\quad h_g(X)
=\sup _{Z\neq 0} \Big ( \frac {\displaystyle
{g_X(Z)}}{\displaystyle {g_X^\sigma (Z)}}\Big )^{1/2}
$$
respectively. It follows that if \eqref {e1.0}
and \eqref {e1.00} are fulfilled, then $h_g(X)=\lambda _1(X)$ and
\begin{equation}\tag*{(\ref{e1.0})$'$}
\begin{gathered}
g^\sigma _X(Z) =\sum _{j=1}^d\lambda _j(X)^{-1}(z_j^2+\zeta
_j^2),\quad Z=(z,\zeta )\in W.
\end{gathered}
\end{equation}
We usually assume that
\begin{equation}\label{Eq:UnsertPrincMetric}
h_g(X)\le 1
\quad \iff \quad
g_X\le g_X^\sigma ,\qquad X\in W,
\end{equation}
i.{\,}e. the \emph{uncertainly principle} holds.

\par

The metric $g$ is called \emph{symplectic} if $g_X=g^\sigma _X$ for
every $X\in W$. It follows that $g$ is symplectic if and only if
$\lambda _1(X)=\cdots =\lambda _d(X)=1$ in \eqref {e1.0}. If $g_X$ is given by \eqref{e1.0}, then the corresponding
\emph{symplectic metric} is given by
$$
g^{{}_0} _X(Z) =\sum _{j=1}^d(z_j^2+\zeta_j^2).
$$
We observe that $g^{{}_0}$ is defined in a
symplectically invariant way
(cf. \cite{Tof6}).

\par

Let $X\in W$ be fixed, and let $g=g_X$ be as above. Then the
operator $\Delta _g$ is defined by
$\mathscr F_\sigma (\Delta _gf)=-4g^\sigma \cdot \widehat f$ when
$f\in \mathscr S'(W)$. The operator $\Delta _g$ is related to the
Laplace-Beltrami operator for $g$, and is obviously symplectically
invariantly defined, since similar facts hold for $\mathscr F_\sigma$
and $g^\sigma$. If $Z=(z,\zeta )$ are symplectic coordinates such that
\eqref{e1.0} holds, then it follows by straight-forward computation
that
$$
\Delta _{g_X} =\sum _{j=1}^d \lambda _j(X)^{-1}({\partial _{z_j}^2} +
{\partial _{\zeta _j}^2}).
$$

\par

The Riemannian metric $g$ on $W$ is called
\emph{slowly varying}
if there are positive constants $c$ and $C$ such that
\begin{equation}\label{Eq:Slowly}
\begin{gathered}
g_X(Y-X)\le c\quad \Rightarrow \phantom {\quad 
\text{and}\quad X,Y\in
W}
\\
\quad {C}^{-1}g_Y(Z) \le g_X(Z)\le Cg_Y(Z)\quad \text{for
every}\quad Z\in W.
\end{gathered}
\end{equation}

\par

If $g$ and $G$ are Riemannian metrics, then $G$ is called \emph{$g$-continuous}, if there are
positive constants $c$ and $C$ such that
\begin{equation}\label{Eq:gContMetric}
\begin{gathered}
g_X(Y-X)\le c\quad \Rightarrow \phantom {\quad 
\text{and}\quad X,Y\in
W}
\\
\quad {C}^{-1}G_Y(Z) \le G_X(Z)\le CG_Y(Z)\quad \text{for
every}\quad Z\in W.
\end{gathered}
\end{equation}
Lastly, a positive function $m$ is called
\emph{$g$-continuous} if there are
positive constants $c$ and $C$ such that
\begin{equation}\label{Eq:gContFunc}
\begin{gathered}
g_X(Y-X)\le c\quad \Rightarrow \phantom {\quad 
\text{and}\quad X,Y\in
W}
\\
\quad {C}^{-1}m(Y) \le m(X)\le Cm(Y).
\end{gathered}
\end{equation}

\par

The metric $g$ is called
\emph{$\sigma$-temperate},
if there are positive constants $c$, $C$,
and $N$ such that
$$
g_Y(Z) \le g_X(Z)(1+g_Y^\sigma (X-Y))^N,
\qquad
X,Y,Z\in W.
$$
As in \cite{BuTo0,Tof6}, $g$
is called \emph{feasible}
if it is slowly varying and satisfies 
\eqref{Eq:UnsertPrincMetric}, and 
%it is called 
\emph{strongly feasible} if
it is feasible and $\sigma$-temperate.

\par

The weight function $m$ is called
\emph{$(\sigma ,g)$-temperate},
if there are positive constants $c$, $C$,
and $N$ such that
$$
m(Y) \le m(X)(1+g_Y^\sigma(X-Y))^N,
\qquad
X,Y\in W.
$$

\par

%%%
\subsection{An extended family of
pseudo-differential calculi}\label{sec14}
%%%

\par

Next we discuss some issues in pseudo-differential
calculus. Let $V$ be a real vector space
of dimension $d$ and $a\in \mascS 
(V\times V')$ be fixed. Suppose also that
$A$ belongs to $\maclL (V)$, the set of all
linear mappings on $V$.
Then the pseudo-differential operator
$\op _A(a)$ is the linear and continuous
operator on $\mascS (V)$, given by
\begin{equation}\label{e0.5}
(\op _A(a)f)(x)
=
(2\pi  ) ^{-d}\iint _{V\times V'}
a(x-A(x-y),\xi )f(y)e^{i\scal {x-y}\xi }\, dyd\xi ,
\end{equation}
%%
% %%
% \begin{multline}\label{e0.5}
% (\op _A(a)f)(x)
% \\[1ex]
% =
% (2\pi  ) ^{-d}\iint _{V\times V'}
% a(x-A(x-y),\xi )f(y)e^{i\scal {x-y}\xi }\, dyd\xi ,
% \end{multline}
% %%
when $f\in \mascS(V)$. For
general $a\in \mascS'(V\times V')$, the
pseudo-differential operator $\op _A(a)$
is defined as the linear and
continuous operator from
$\mascS(V)$ to $\mascS'(V)$ with
distribution kernel given by
\begin{equation}\label{atkernel}
K_{a,A}(x,y)=(2\pi )^{-\frac d2}
(\mascF _2^{-1}a)(x-A(x-y),x-y).
\end{equation}
Here $\mascF _2F$ is the partial Fourier transform of 
$F(x,y)\in
\mascS'(V\times V)$ with respect to the $y$ variable.
This definition makes sense, since the mappings
\begin{equation}\label{homeoF2tmap}
\mascF _2\quad \text{and}\quad F(x,y)
\mapsto F(x-A(x-y),x-y)
\end{equation}
are homeomorphisms on $\mascS'(V \times V')$ and
on $\mascS '(V\times V)$, respectively.
In particular, the map $a\mapsto K_{a,A}$
is a homeomorphism from
$\mascS'(V \times V')$ to $\mascS '(V\times V)$.

\par

An important special case appears when
$A=t\cdot I$, with
$t\in \mathbf R$. Here and in what follows,
$I=I_V$ is the identity map on $V$.
In this case we set
$$
\op _t(a) = \op _{t\cdot I}(a).
$$
The \emph{normal} or
\emph{Kohn-Nirenberg representation}, 
$a(x,D)$, is  obtained
when $t=0$, and the \emph{Weyl quantization},
$\op ^w(a)$, is obtained when $t=\frac 12$.
That is,
$$
a(x,D) = \op _0(a)
\quad \text{and}\quad \op ^w(a) = \op _{1/2}(a).
$$

\par

We recall that if $A\in \maclL (V)$, then
it follows from the kernel theorem of Schwartz
and Fourier's inversion formula that
the map $a\mapsto \op _A(a)$ is bijective
from $\mascS '(V\times V')$
to the set of linear and continuous mappings
from $\mascS (V)$ to
$\mascS '(V')$ (cf. e.{\,}g. \cite{Ho1,Tof12}).
We refer to \cite{Ho3,Tof12} for the proof
of the following result, concerning transitions
between different pseudo-differential calculi.

\par

\begin{prop}\label{Prop:CalculiTransfer}
Let $a_1,a_2\in \mascS '(V\times V')$ and
$A_1,A_2\in \maclL (V)$.
Then
\begin{equation}\label{calculitransform}
\op _{A_1}(a_1) = \op _{A_2}(a_2) \quad \iff \quad
e^{i\scal {A_2D_\xi}{D_x }}a_2(x,\xi )=e^{i\scal {A_1D_\xi}{D_x }}a_1(x,\xi ).
\end{equation}
\end{prop}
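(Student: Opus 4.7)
The plan is to reduce the equivalence to the well-known injectivity of the Kohn--Nirenberg quantization. As recalled in the paragraph preceding the proposition, the map $b\mapsto \op _0(b)=b(x,D)$ is a bijection from $\mascS '(V\times V')$ onto the continuous linear maps $\mascS (V)\to \mascS '(V)$. Consequently, it suffices to establish the single transfer identity
$$
\op _A(a)=\op _0\bigl (e^{i\scal{AD_\xi}{D_x}}a\bigr ),\qquad a\in \mascS '(V\times V'),\ A\in \maclL (V),
$$
because once this is available, injectivity of the Kohn--Nirenberg quantization yields $\op _{A_1}(a_1)=\op _{A_2}(a_2)$ if and only if $e^{i\scal{A_1D_\xi}{D_x}}a_1=e^{i\scal{A_2D_\xi}{D_x}}a_2$, i.e.\ \eqref{calculitransform}.

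To prove the transfer identity, I would first take $a\in \mascS (V\times V')$ and compare distribution kernels. Setting $\tilde a=\mascF _2^{-1}a$, formula \eqref{atkernel} gives $K_{a,A}(x,y)=(2\pi )^{-d/2}\tilde a(x-A(x-y),x-y)$ and, with $b$ as above, $K_{b,0}(x,y)=(2\pi )^{-d/2}\tilde b(x,x-y)$. Equality of kernels, after the change of variables $z=x-y$, reduces to $\tilde b(x,z)=\tilde a(x-Az,z)$; applying $\mascF _2$ in $z$ and invoking Fubini produces the oscillatory representation
$$
b(x,\xi )=(2\pi )^{-d}\iint a(x-Az,\xi +\zeta )\, e^{i\scal{z}{\zeta}}\, dz\, d\zeta .
$$
This integral is exactly $\mascF ^{-1}\bigl [e^{i\scal{Az}{\zeta}}\mascF a\bigr ](x,\xi )$, where $\mascF$ denotes the $2d$-dimensional Fourier transform on $V\times V'$ and $(\zeta ,z)$ is the variable dual to $(x,\xi )$. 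Since $D_x$ corresponds to multiplication by $\zeta$ and $D_\xi$ to multiplication by $z$ on the Fourier side, this identifies $b=e^{i\scal{AD_\xi}{D_x}}a$.

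To pass from $a\in \mascS$ to $a\in \mascS '$, both sides of the transfer identity are weak-$*$ continuous in $a$: the left-hand side by the homeomorphism property in \eqref{homeoF2tmap}, the right-hand side because $e^{i\scal{AD_\xi}{D_x}}$ is continuous on $\mascS '(V\times V')$. The latter reduces to the fact that multiplication by the smooth bounded function $e^{i\scal{Az}{\zeta}}$, whose derivatives grow only polynomially, is continuous on $\mascS (V'\times V)$; then, by duality and conjugation with $\mascF$, the operator $e^{i\scal{AD_\xi}{D_x}}$ inherits continuity on $\mascS '(V\times V')$, and the identity extends from $\mascS$ to $\mascS '$ by weak-$*$ density.

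The step I expect to be most delicate is this last one: rigorously justifying that the formal oscillatory integral and the exponential Fourier multiplier define one and the same continuous operation on tempered distributions. The subtlety is that the symbol $e^{i\scal{Az}{\zeta}}$ does not belong to any classical symbol class with uniformly bounded derivatives, so verifying continuity of multiplication by it on $\mascS$ requires explicit control of the polynomial growth of its derivatives. Once this is established, the rest of the argument is essentially mechanical.
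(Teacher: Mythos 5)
The paper omits the proof of this proposition, deferring to \cite{Ho3,Tof12}, and your argument is precisely the standard one found there: identify $\op _A(a)$ with $\op _0(b)$ via the kernel formula \eqref{atkernel} and the change of variables $\tilde b(x,z)=\tilde a(x-Az,z)$, recognize $b=e^{i\scal {AD_\xi}{D_x}}a$ on the Fourier side, and conclude by injectivity of $a\mapsto \op _0(a)$ from the Schwartz kernel theorem. Your argument is correct; the final continuity point you flag as delicate is in fact routine (derivatives of $e^{i\scal {Az}{\zeta}}$ grow only polynomially, so multiplication by it preserves $\mascS$ and hence $\mascS '$), and the paper itself asserts this continuity without proof in the remark following the proposition.
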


\par

Note here that the latter equality in 
\eqref{calculitransform} makes sense
since it is equivalent to
$$
e^{i\scal {A_2x}{\xi}}\widehat a_2(\xi ,x)
=e^{i\scal {A_1x}{\xi}}\widehat a_1(\xi ,x),
$$
and that the map $a\mapsto e^{i\scal {Ax} \xi }a$ 
is continuous on
$\mascS '(V\times V')$ (cf. e.{\,}g. \cite{Tof12}).

\par

For any $A\in \maclL(V)$, the
$A$-product, $a\wpr _Ab$ between
$a\in \mascS' (V\times V')$
and $b\in \mascS'(V\times V')$
is defined by the formula
\begin{equation}\label{wprtdef}
\op _A(a\wpr _A b) = \op _A(a)\circ \op _A(b),
\end{equation}
provided the right-hand side makes sense as a continuous 
operator from
$\mascS (V)$ to $\mascS '(V)$. Since the Weyl
case is especially important, we write $\wpr$
instead of $\wpr _A$ when $A=\frac 12I_V$.

\par

We shall mainly consider pseudo-differential
operators with symbols in $S(m,g)$. This
family of operators possesses
several convenient properties. For example, 
suppose that
$g$ is strongly feasible, $m_k$ is
$g$-continuous and $(\sigma ,g)$-temperate,
and that $a_k\in S(m_k,g)$, $k=1,2$. Then
there is a unique $a\in S(m_1m_2,g)$ such
that
$$
\op ^w(a_1)\circ \op ^w(a_2)
=
\op ^w(a).
$$
That is,
\begin{equation}\label{Eq:CompWeylHormClasses}
S(m_1,g)\wpr S(m_2,g) \subseteq S(m_1m_2,g).
\end{equation}

\par

% There is also a convenient family
% Sobolev type Hilbert spaces, $H(m,g)$,
% introduced by Bony and Chemin in
% \cite{BonChe}, which is especially
% suitable for such pseudo-differential
% operators.
% (See also Section 2.6 in \cite{Ler}.)
% For example, if $m$ and $m_0$ are
% $g$-continuous and $(\sigma ,g)$-temperate,
% and $a\in S(m,g)$, then 
% $$
% \op ^w(a):H(m_0,g)\to H(m_0/m,g)
% $$
% is continuous. In \cite{BonChe,Ler} it is
% also shown that there are
% $a_0\in S(m,g)$ and $b_0\in S(1/m,g)$ such that
% %%
% \begin{equation}\label{Eq:WeylHormInverses}
% \op ^w(b_0)= \op ^w(a_0)^{-1},
% \qquad
% a_0\in S(m,g),\ b_0\in S(1/m,g).
% \end{equation}
% %%
% Especially, it follows
% $$
% \op ^w(a_0) : H(m_0,g) \to H(m_0/m,g)
% \quad \text{and}\quad
% \op ^w(b_0) : H(m_0/m,g) \to H(m_0,g)
% $$
% are continuous bijections, which are inverses
% to each other. In particular it follows
% from the composition properties above that

% \par

% We also remark that $H(1,g)=L^2(V)$ for every
% strongly feasible metric $g$.

\par

%%%
\subsection{Schatten-von Neumann classes}\label{sec15}
%%%

\par

In order to discuss full range of
Schatten-von Neumann classes, we recall the
definition of quasi-Banach spaces.

\par

\begin{defn}
A \emph{quasi-norm}
$\nm \cdo{\maclB}$ of order $p\in (0,1]$,
or a \emph{$p$-norm}, to the vector space
$\maclB$, is a functional on $\maclB$
such that the following is true:
\begin{enumerate}
\item $\nm f{\maclB} \ge 0$, when
$f\in \maclB$, with equality only for $f=0$;

\vrum

\item $\nm {\alpha f}{\maclB} =
|\alpha |\, \nm {f}{\maclB}$, when
$f\in \maclB$ and $\alpha \in \mathbf C$;

\vrum

\item $\nm {f+g}{\maclB}^p
\le
\nm f{\maclB}^p+\nm g{\maclB}^p$, when
$f,g \in \maclB$.
\end{enumerate}
We equip $\maclB$ with the topology induced
by $\nm \cdo{\maclB}$.
The space $\maclB$ is called a
\emph{quasi-Banach space of order $p$}, or a
\emph{$p$-Banach space}, if $\maclB$ is complete
under this topology.
\end{defn}

\par

Evidently, a topological
vector space is a Banach space, if and only if
it is a quasi-Banach space of order $1$. 

\medspace

%Next we recall some properties on Schatten-von Neumann 
%classes.
Let $\maclH _1$ and $\maclH _2$ be
Hilbert spaces, and let $T$ be a linear
map from $\maclH _1$ to $\maclH _2$. For
every integer $j\ge 1$, the \emph{singular number}  of $T$ of
order $j$ is given by
$$
\sigma _j(T) = \sigma _j(\maclH _1,\maclH _2,T)
\equiv \inf \nm {T-T_0}{\maclH _1\to \maclH _2},
$$
where the infimum is taken over all linear operators $T_0$ from $\maclH _1$
to $\maclH _2$ with rank at most $j-1$. Therefore, $\sigma _1(T)$
equals $\nm T{\maclH _1\to \maclH _2}$, while
$\sigma _j(T)$ is non-negative and
decreases with $j$.

\par

For any $p\in (0,\infty ]$ we set
$$
\nm T{\mascI _p} = \nm T{\mascI _p(\maclH _1,\maclH _2)}
\equiv \nm { \{ \sigma _j(\maclH _1,\maclH _2,T) \} _{j=1}^\infty}{\ell ^p}
$$
(which might attain $+\infty$). The operator $T$ is called a \emph{Schatten-von
Neumann operator} of order $p$ from $\maclH _1$ to $\maclH _2$, if
$\nm T{\mascI _p}$ is finite, i.{\,}e.
$\{ \sigma _j(\maclH _1,\maclH _2,T) \} _{j=1}^\infty$ should belong to $\ell ^p$.
The set of all Schatten-von Neumann operators of order $p$ from
$\maclH _1$ to $\maclH _2$ is denoted by $\mascI _p =
\mascI _p(\maclH _1,\maclH _2)$. We note that
$\mascI _\infty(\maclH _1,\maclH _2)$ agrees with $\maclB (\maclH _1
,\maclH _2)$ (also in norms), the set of linear and bounded operators
from $\maclH _1$ to $\maclH _2$. If $p<\infty$, then
$\mascI _p(\maclH _1,\maclH _2)$ is contained in $\maclK(\maclH _1
,\maclH _2)$, the set of linear and compact operators from $\maclH _1$
to $\maclH _2$. The spaces $\mascI _p(\maclH _1,\maclH _2)$ for
$p\in (0,\infty ]$ and $\maclK(\maclH _1 ,\maclH _2)$ are quasi-Banach
spaces which are Banach spaces when $p\ge 1$. Furthermore,
$\mascI _2(\maclH _1,\maclH _2)$ is a Hilbert space and agrees with the
set of Hilbert-Schmidt operators from $\maclH _1$ to $\maclH _2$ (also in
norms). We set $\mascI  _p(\maclH )=\mascI  _p(\maclH ,\maclH )$.

\par

The set $\mascI _1(\maclH _1,\maclH _2)$ is the set of trace-class
operators from $\maclH _1$ to $\maclH _2$, and $\nm \cdo
{\mascI _1 (\maclH _1,\maclH _2)}$ coincides with the trace-norm. If in addition
$\maclH _1=\maclH _2=\maclH$, then the trace
$$
\operatorname{Tr}_\maclH (T) \equiv \sum _{\alpha} (Tf_\alpha ,f_\alpha)_{\maclH}
$$
is well-defined and independent of the orthonormal basis $\{ f_\alpha \}_{\alpha}$
in $\maclH$.
%Finally, $\mascI _1(\maclH _1,\maclH _2)$ agrees the set of trace-class
%operators from 

\par

Now let $\maclH _3$ be another Hilbert space
and let $T_k$ be a linear and continuous operator from
$\maclH _k$ to $\maclH _{k+1}$, $k=1,2$. Then we recall 
the H{\"o}lder relation 
\begin{equation}\label{SchattenComp}
\begin{gathered}
\nm {T_2\circ T_1}{\mascI _{r}(\maclH _1,\maclH _3)}\le 
\nm {T_1}{\mascI _{p_1}(\maclH _1,\maclH _2)}
\nm {T_2}{\mascI _{p_2}(\maclH _2,\maclH _3)}
\\[1ex]
\text{when}\quad
\frac 1{p_1}+\frac 1{p_2}=\frac 1r
\end{gathered}
\end{equation}
(cf. e.{\,}g. \cite{Si,Tof10}).

\par

In particular, the map $(T_1,T_2)\mapsto T_2^*\circ T_1$ is continuous from 
$\mascI _p(\maclH _1,\maclH _2)\times \mascI _{p'}(\maclH _1,\maclH _2)$
to $\mascI _1(\maclH _1)$, giving that
\begin{equation}\label{Eq:SchattenScalar}
(T_1,T_2)_{\mascI _2(\maclH _1,\maclH _2)}\equiv
\operatorname{Tr}_{\maclH _1}(T^*_2\circ T_1)
\end{equation}
is well-defined and continuous from $\mascI _p(\maclH _1,\maclH _2)\times
\mascI _{p'}(\maclH _1,\maclH _2)$ to $\mathbf C$. If $p=2$, then
the product, defined by \eqref{Eq:SchattenScalar} agrees with the scalar product in
$\mascI _2(\maclH _1,\maclH _2)$.

\par

The proof of the following result is omitted, since 
it can be found in e.{\,}g.
\cite{BirSol,Si}.

\par

\begin{prop}\label{Prop:SchattenDual}
Let $p\in [1,\infty]$, $\maclH _1$ and $\maclH _2$ be Hilbert spaces, and let $T$
be a linear and continuous map from $\maclH _1$ to $\maclH _2$. Then the
following is true:
\begin{enumerate}
\item if $q\in [1,p' ]$, then
$$
\nm T{\mascI _p(\maclH _1,\maclH _2)}
=\sup |(T,T_0)_{\mascI _2(\maclH _1,\maclH _2)}|,
$$
where the supremum is taken over all $T_0\in \mascI
_q(\maclH _1,\maclH _2)$ such that $\nm {T_0}{\mascI
_{p'}(\maclH _1,\maclH _2)}\le 1$;

\vrum

\item if in addition $p<\infty$, then the dual of $\mascI
_p(\maclH _1,\maclH _2)$ can be identified through the form
\eqref{Eq:SchattenScalar}.
\end{enumerate}
\end{prop}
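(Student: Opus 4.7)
The plan is to reduce everything to the scalar case through the Schmidt (singular-value) decomposition. For any $T\in \mascI _p(\maclH _1,\maclH _2)$ one has
\[
T = \sum _j \sigma _j(T)\, (\,\cdot\,,e_j)_{\maclH _1}\, f_j
\]
for some orthonormal sequences $\{e_j\}$ in $\maclH _1$ and $\{f_j\}$ in $\maclH _2$. If $T_0$ is taken with a Schmidt decomposition sharing the same singular vectors, then the pairing \eqref{Eq:SchattenScalar} reduces to a scalar $\ell ^p$-$\ell ^{p'}$ duality for the singular values, and this is the scalar fact I intend to lift in both parts.

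For the upper bound in (i), apply the H{\"o}lder-type inequality \eqref{SchattenComp} with $1/p+1/p'=1$, which gives
\[
|(T,T_0)_{\mascI _2(\maclH _1,\maclH _2)}|
=
|\operatorname{Tr}_{\maclH _1}(T_0^*\circ T)|
\le
\nm {T_0^*\circ T}{\mascI _1}
\le
\nm {T}{\mascI _p}\nm {T_0}{\mascI _{p'}}
\le
\nm {T}{\mascI _p},
\]
for every admissible $T_0$. For the reverse inequality, take a scalar sequence $\{\tau _j\}$ dual to $\{\sigma _j(T)\}$ in the $\ell ^p$-$\ell ^{p'}$ sense (for instance $\tau _j=\sigma _j(T)^{p-1}/\nm T{\mascI _p}^{p-1}$ when $1<p<\infty$, with the appropriate degenerate choices in the boundary cases $p=1$ and $p=\infty$), and set
\[
T_0^{(N)}= \sum _{j=1}^N \tau _j \, (\,\cdot\,,e_j)_{\maclH _1}\, f_j.
\]
Since $T_0^{(N)}$ has finite rank it lies in $\mascI _q(\maclH _1,\maclH _2)$ for every $q\in (0,\infty ]$, in particular for every $q\in [1,p']$. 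A direct computation yields $(T,T_0^{(N)})_{\mascI _2}=\sum _{j=1}^N \sigma _j(T)\overline {\tau _j}\to \nm T{\mascI _p}$ as $N\to \infty$, while $\nm {T_0^{(N)}}{\mascI _{p'}}\le 1$, and this supplies the matching lower bound.

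For (ii), the assumption $p<\infty$ guarantees that finite-rank operators (equivalently, the intersection $\mascI _2(\maclH _1,\maclH _2)\cap \mascI _p(\maclH _1,\maclH _2)$) are dense in $\mascI _p(\maclH _1,\maclH _2)$. Given a continuous linear functional $\ell$ on $\mascI _p(\maclH _1,\maclH _2)$, restrict it to the Hilbert space $\mascI _2(\maclH _1,\maclH _2)\cap \mascI _p(\maclH _1,\maclH _2)$. By the Riesz representation theorem inside $\mascI _2(\maclH _1,\maclH _2)$ there exists a unique $T_0\in \mascI _2(\maclH _1,\maclH _2)$ such that $\ell (S)=(S,T_0)_{\mascI _2}$ on this dense subspace. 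Applying (i) with the roles of $p$ and $p'$ interchanged identifies $\nm {T_0}{\mascI _{p'}}$ with the operator norm of $\ell$ on finite-rank operators, forcing $T_0\in \mascI _{p'}(\maclH _1,\maclH _2)$, and density then extends the identity $\ell (T)=(T,T_0)_{\mascI _2}$ to all $T\in \mascI _p(\maclH _1,\maclH _2)$.

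The only real technical point, and the place where some care is required, is the construction of the extremal sequence $\{\tau _j\}$ in (i) for the boundary cases $p=1$ and $p=\infty$. The reason the statement allows $q$ to range freely over $[1,p']$ is precisely that the extremal operator $T_0^{(N)}$ can be chosen of finite rank and therefore lies in every Schatten class simultaneously.
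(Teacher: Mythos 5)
The paper does not actually prove this proposition --- it is stated with the remark that the proof can be found in Birman--Solomyak and Simon --- so your argument is being measured against the classical one from those references, which it essentially reproduces in part (i). That part is correct: the upper bound via $|\operatorname{Tr}_{\maclH _1}(T_0^*\circ T)|\le \nm {T_0^*\circ T}{\mascI _1}\le \nm T{\mascI _p}\nm {T_0}{\mascI _{p'}}$ and the lower bound via finite-rank truncations $T_0^{(N)}$ built from the Schmidt decomposition are exactly the standard route, and your closing observation about why $q$ may range over all of $[1,p']$ (the extremizers are finite rank) is the right one. The only caveat is that the Schmidt decomposition presupposes $T$ compact; this is automatic when $T\in \mascI _p$ with $p<\infty$, but the statement allows arbitrary bounded $T$, so the cases $p=\infty$ and $\nm T{\mascI _p}=\infty$ need a short separate approximation argument (rank-one $T_0=(\,\cdot\,,e)f$ with $|(Te,f)|$ close to $\nm T{\mascI _\infty}$, respectively truncated and renormalized dual sequences). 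This is minor.

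Part (ii), however, contains a genuine gap for $1\le p<2$. You invoke the Riesz representation theorem in $\mascI _2(\maclH _1,\maclH _2)$ to produce $T_0\in \mascI _2$ representing $\ell$, but this requires $\ell$ to be bounded in the $\mascI _2$-norm on $\mascI _2\cap \mascI _p$, and the only estimate available is $|\ell (S)|\le \nm \ell{}\,\nm S{\mascI _p}$. Since $\nm S{\mascI _p}\ge \nm S{\mascI _2}$ when $p<2$, this does not give $\mascI _2$-boundedness; and indeed no such $T_0\in \mascI _2$ can exist in general, because for $p=1$ the representing operator is an arbitrary element of $\mascI _\infty (\maclH _1,\maclH _2)=\maclB (\maclH _1,\maclH _2)$, which is typically not Hilbert--Schmidt. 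The standard repair is to bypass Riesz entirely: the map $(f,e)\mapsto \ell \big ((\,\cdot\,,e)_{\maclH _1}f\big )$ is a sesquilinear form on $\maclH _2\times \maclH _1$ bounded by $\nm \ell{}\,\nm f{\maclH _2}\nm e{\maclH _1}$ (since the rank-one operator has $\mascI _p$-norm $\nm e{\maclH _1}\nm f{\maclH _2}$), hence is represented by a bounded operator $T_0\colon \maclH _1\to \maclH _2$; one then checks $\ell (S)=\operatorname{Tr}_{\maclH _1}(T_0^*\circ S)$ for finite-rank $S$, applies part (i) with $p$ and $p'$ interchanged to conclude $T_0\in \mascI _{p'}$ with $\nm {T_0}{\mascI _{p'}}\le \nm \ell{}$, and finishes by the density of finite-rank operators in $\mascI _p$ for $p<\infty$, exactly as you indicate in your last step. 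Your argument as written is correct only for $2\le p<\infty$.
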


\par

Later on we are especially interested in finding necessary and sufficient
conditions on symbols, in order for the corresponding pseudo-differential
operators to belong to $\mascI _p(\maclH _1,\maclH _2)$, where
$\maclH _1$ and $\maclH _2$ satisfy
$$
\mascS (V)\hookrightarrow \maclH _1,\maclH _2
\hookrightarrow \mascS'(V).
$$
Therefore, for such Hilbert spaces and $p\in (0,\infty ]$, let
\begin{align}
s_{A,p}(\maclH _1,\maclH _2)
&\equiv
\sets {a\in \mascS '(V\times V')}
{\op _A(a)\in \mascI _p(\maclH _1,\maclH_2 )}\notag
\intertext{and}
\nm a{s_{A,p}(\maclH _1,\maclH _2)}
&\equiv \nm {\op _A(a)}{\mascI _p(\maclH _1,\maclH _2)}.
\label{SchattNormId}
\end{align}
Since the map $a\mapsto \op _A(a)$ is bijective from 
$\mascS'(V \times V')$ to the set of all linear and 
continuous operators from $\mascS(V)$ to 
$\mascS'(V)$, it follows from the 
definitions
that the map $a\mapsto \op _A(a)$ restricts to a
bijective and isometric map
from $s_{A,p}(\maclH _1,\maclH _2)$ to
$\mascI _p(\maclH _1, \maclH _2)$. We put
\begin{alignat*}{3}
s_{A,p}(W) &= s_{A,p}(\maclH _1,\maclH _2) &
\quad &\text{when} &\quad
\maclH _1 &= \maclH_2 =L^2(V).
% \intertext{and, more generally,}
% s_{A,p}(m_1,m_2,g) &=
% s_{A,p}(\maclH _1,\maclH _2) &
% \quad &\text{when} &\quad
% \maclH _1 &= H(m_1,g),\ \maclH_2 =H(m_2,g),
\end{alignat*}
%%
% when $g$ is strongly feasible and
% $m_k$, $k=1,2$,
% are $g$-continuous and $(\sigma ,g)$-temperate.

\par

For convenience we also 
put $s_p^w = s_{A,p}$ in the Weyl case (i.{\,}e.
when $A=\frac 12\cdot I_V$).

\par

\subsection{Wiener Lebesgue spaces with
respect to slowly varying metrics}\label{sec16}

\par

Before defining the Wiener-Lebesgue spaces,
we recall some facts about $g$-balls, which
are given by
\begin{equation}\label{Eq:gBalls}
U_{X,R} = U_{g,X,R}
\equiv \sets {Y\in W}{g_{X}(Y-X)< R^2},
\end{equation}
when $X\in W$ and $R>0$.
The following lemma is a consequence of
Lemma 1.4.9 and the proof of Theorem 1.4.10
in \cite{Ho3}. The proof is therefore omitted.
%Henceforth we let $g$-balls be denoted by

\par

\begin{lemma}\label{Lemma:AdmCover}
Let $g$ be slowly varying on $W$ and let $c$ and $C$
be as in \eqref{Eq:Slowly}. Then there exists
a sequence $\{X_j\} _{j=1}^\infty$ such that if
$$
U_j = U_{X_j,R},
$$
for some $R>0$ such that $\frac c2<R^2<c$,
then the following is true:
\begin{enumerate}
\item $g_{X_j}(X_j-X_k)\ge \frac c{2C}$ for every $j,k=1,2,\dots$ such that $j\neq k$;

\vrum

\item $W =\bigcup _{j=1}^\infty U_j$;

\vrum

\item if $j\in\mathbf{Z}_+$ is fixed, then
$U_j\cap U_k\neq \emptyset$ for at
most $(4C^3+1)^{2d}$ numbers of $k$.
\end{enumerate}
\end{lemma}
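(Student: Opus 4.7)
The plan is to construct $\{X_j\}$ as a maximal $g$-separated sequence via Zorn's lemma, and then extract the three properties by tracking constants through the slowly varying condition \eqref{Eq:Slowly}, in the spirit of Theorem~1.4.10 in \cite{Ho3}.

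First, I would let $\mathcal F$ denote the collection of subsets $S\subset W$ satisfying $g_X(X-Y)\ge \frac{c}{2C}$ whenever $X,Y\in S$ are distinct, ordered by inclusion, and apply Zorn's lemma to get a maximal element $S_0$. Since slow variation makes $g$ locally comparable to the Euclidean metric, $S_0$ meets every compact set in a finite set, hence is countable; enumerate $S_0=\{X_j\}_{j\ge 1}$. Property (i) holds by construction. For (ii), given any $Y\in W\setminus S_0$, maximality forces some $X_j$ with $g_{X_j}(X_j-Y)<\frac{c}{2C}$ or $g_Y(Y-X_j)<\frac{c}{2C}$. In the first case $Y\in U_j$ directly since $\frac{c}{2C}\le \frac{c}{2}<R^2$. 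In the second case, from $g_Y(Y-X_j)<c$ and \eqref{Eq:Slowly} I get $g_{X_j}(Z)\le Cg_Y(Z)$ for all $Z$, so $g_{X_j}(Y-X_j)\le C\cdot\frac{c}{2C}=\frac{c}{2}<R^2$, and again $Y\in U_j$.

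For (iii), fix $j$ and suppose $U_j\cap U_k\neq \emptyset$ at a point $Y$. The inequalities $g_{X_j}(Y-X_j),\,g_{X_k}(Y-X_k)<R^2<c$ allow two successive applications of \eqref{Eq:Slowly} through $Y$, yielding $g_{X_j}(X_k-Y)\le C^2 g_{X_k}(X_k-Y)<C^2R^2$; the triangle inequality for the norm $g_{X_j}^{1/2}$ then gives
\[
g_{X_j}(X_j-X_k)^{1/2}<(1+C)R,
\]
so every such $X_k$ lies in an enlarged $g_{X_j}$-ball about $X_j$. Next I would pick a small radius $r$ so that the $g_{X_k}$-balls $B_k=\{Z:g_{X_k}(Z-X_k)<r^2\}$ are pairwise disjoint and contained in a single slightly larger $g_{X_j}$-ball; the same slow-variation estimate combined with (i) makes this work once $(1+C)r<\sqrt{c/(2C)}$.

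The hard part is the bookkeeping in (iii). One must verify that the symplectic volumes of the $B_k$ and of $B_j$ differ by at most a factor $C^{d}$ (coming from the determinants of $g_{X_j}$ and $g_{X_k}$), then compute the ratio of the volume of the enclosing $g_{X_j}$-ball to that of a single $B_k$. Optimizing $r$ and carrying out the same volume-counting argument as in the proof of Theorem~1.4.10 in \cite{Ho3} then yields the explicit bound $(4C^3+1)^{2d}$.
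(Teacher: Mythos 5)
Your overall strategy is exactly the one the paper relies on: the paper gives no proof of this lemma, deferring to Lemma 1.4.9 and the proof of Theorem 1.4.10 in H{\"o}rmander, and your maximal $\frac{c}{2C}$-separated set plus volume counting is precisely that argument. Parts (i) and (ii) are complete and correct: the two cases arising from maximality of $S_0$ are handled properly, and the use of \eqref{Eq:Slowly} to pass from $g_Y(Y-X_j)<\frac{c}{2C}$ to $g_{X_j}(Y-X_j)<\frac c2<R^2$ is right. The chaining through a common point $Y\in U_j\cap U_k$ to get $g_{X_j}\le C^2 g_{X_k}$, and hence $g_{X_j}(X_j-X_k)^{1/2}<(1+C)R$, is also correct.

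The one place where the argument is not actually finished is the explicit bound $(4C^3+1)^{2d}$ in (iii). You assert that ``optimizing $r$'' in your volume count yields it, but carrying out the count as you describe does not obviously do so: the points $X_k$ with $U_k\cap U_j\neq\emptyset$ are $\sqrt{c/(2C^3)}$-separated in the $g_{X_j}$-norm (since $g_{X_j}\ge C^{-2}g_{X_k}$ and $g_{X_k}(X_k-X_l)\ge c/(2C)$) and lie in the $g_{X_j}$-ball of radius $(1+C)R<(1+C)\sqrt c$, so disjoint half-separation balls give $N_j\le\bigl(1+2\sqrt2\,(1+C)C^{3/2}\bigr)^{2d}$, which for $C$ near $1$ exceeds $(4C^3+1)^{2d}$. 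Also, the volume comparison factor between the balls $B_k$ and $B_j$ is $C^{2d}$ rather than $C^{d}$, since $C^{-2}g_{X_j}\le g_{X_k}\le C^2 g_{X_j}$ in dimension $2d$. None of this matters downstream (only the existence of some finite overlap bound depending on $C$ and $d$ is ever used, cf.\ Remark \ref{Rem:AdmCover1}), but as written your proof establishes (iii) with a different constant than the one stated, so either the counting must be sharpened or the constant in the statement should be treated as unspecified.
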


\par

\begin{defn}\label{Def:AdmCover}
Let $g$ be slowly varying on $W$, $c$ and $C$
be as in \eqref{Eq:Slowly}. Then the family
of $g$-balls $\{ U_j\} _{j=1}^\infty$ in Lemma 
\ref{Lemma:AdmCover} is called
an \emph{admissible $g$-covering} of $W$.
\end{defn}

% \begin{defn}\label{Def:AdmCover}
% % Let $g$ be slowly varying on $W$, $c$ and $C$
% % be as in \eqref{Eq:Slowly}. The
% % $g$-balls $\{ U_j\} _{j\in\mathbf{Z}_+}$ is called
% % an \emph{admissible covering} of $W$, if
% % there is a sequence $\{X_j\} _{j\in\mathbf{Z}_+}$
% % in $W$ and $R>0$ such that
% % $$
% % g_{X_j}(X_j-X_k)\ge \frac c{2C}
% % \quad \text{when}\quad j,k\in I,\ j\neq k,
% % $$
% % and $\frac c2<R^2<c$, then
% % $$
% % U_j= \sets {X\in W}{g_{X_j}(X-X_j)< R^2}.
% % $$
% Let $g$ be slowly varying on $W$, $c$ and $C$
% be as in \eqref{Eq:Slowly}. The
% $g$-balls $\{ U_j\} _{j\in\mathbf{Z}_+}$ is called
% an \emph{admissible covering} of $W$, if
% $$
% U_j= \sets {X\in W}{g_{X_j}(X-X_j)< R^2},
% $$
% for some $R>0$ and some sequence
% $\{X_j\} _{j\in\mathbf{Z}_+}$ in $W$ such that
% $\frac c2<R^2<c$, and
% $$
% g_{X_j}(X_j-X_k)\ge \frac c{2C}
% \quad \text{when}\quad j,k\in I,\ j\neq k.
% $$
% \end{defn}

\par

\begin{rem}\label{Rem:AdmCover1}
Let $\{ X_j\} _{j=1}^\infty$ be as in Lemma 
\ref{Lemma:AdmCover}. For future reference, we 
observe that if $Y\in W$, $r,R_1,R_2>0$ satisfy
$$
\frac c2 <R_1^2<R_2^2<c,
\quad 
r < \frac{R_2-R_1}{2C},
$$
and
$U_{Y,r} \cap U_{X_j,R_1}\neq \emptyset$
for some $j\in\mathbf{Z}_+$,
then $U_{Y,r} \subseteq U_{X_j,R_2}$.

\par

As a consequence of Lemma 
\ref{Lemma:AdmCover} there are at most
$(4C^3+1)^{2d}$ numbers of $U_{X_j,R_1}$
or $U_{X_j,R_2}$ which intersect with
$U_{Y,r}$.

\par

In fact, suppose $Z \in U_{Y,r} \cap U_{X_j,R_1}$.
Then, for every $X \in U_{Y,r}$ we have that
\begin{align*}
\big ( g_{X_j}(X-X_j) \big )^{\frac 12}
&=
\big (g_{X_j}(X-Z+Z-X_j)\big )^{\frac 12}
\\[1ex]
&\leq
\big (g_{X_j}(Z-X_j)\big )^{\frac 12}
+
\big (g_{X_j}(X-Z)^{\frac 12}.
\\[1ex]
\intertext{By the fact that $g$ is slowly varying, we 
obtain  that $g_{X_j} \leq C g_Z \leq C^2 g_Y$.
Hence, we have}
\big (g_{X_j}(Z-X_j) \big )^{\frac 12} + \big (g_{X_j}(X-Z) \big )^{\frac 12}
&\leq
R_1 + C \big (g_{Y}(Z-X) \big )^{\frac 12}
\\[1ex]
&\leq R_1 + 2Cr
%\\[1ex]
<R_2,
\end{align*}
%%
% %%
% \begin{align*}
% \sqrt{g_{X_j}(X-X_j)}&= \sqrt{g_{X_j}(X-Z+Z-X_j)} 
% \\[1ex]
% &\leq \sqrt{g_{X_j}(Z-X_j)} + \sqrt{g_{X_j}(X-Z)}.
% \\[1ex]
% \intertext{By the fact that $g$ is slowly varying, we obtain that $g_{X_j} \leq C g_Z \leq C^2 g_Y$. Hence, we have}
% \sqrt{g_{X_j}(Z-X_j)} + \sqrt{g_{X_j}(X-Z)}&\leq R_1 + C\sqrt{g_{Y}(Z-X)}
% \\[1ex]
% &\leq R_1 + 2CR
% %\\[1ex]
% <R_2,
% \end{align*}
% %%
which shows that $X \in U_{X_j,R_2}$, and the assertion
follows.
\end{rem}

\par

\begin{defn}\label{Def:WienerLebDef}
Let $p,q\in (0,\infty ]$, $\theta \in \mathbf R$,
$g$ be a slowly varying
metric on $W$, $\{ U_j\} _{j=1}^\infty$ be an
admissible $g$-covering, and let $U\subseteq \rr d$
be an open ball such that $\{ j+U \} _{j\in \zz d}$
covers $\rr d$.
\begin{enumerate}
\item The \emph{Wiener-Lebesgue space}
$\WL ^{q,p} (\rr d)$ (with respect to $p$ and $q$)
consists of all
measurable functions $f$ such that
$\nm f{\WL ^{q,p}}$ is finite, where
$$
\nm f{\WL ^{q,p}}
\equiv
\Nm {\{ \nm f{L^q(j+U)}\}
_{j\in \zz d}}{\ell ^p(\zz d)}.
$$

\vrum

\item The \emph{Wiener-Lebesgue space}
$\WL _{g,\theta}^{q,p} (W)$
(with respect to $p$, $q$, $\theta$ and $g$)
consists of all
measurable functions $a$ such that
$\nm a{\WL _{g,\theta}^{q,p}}$ is finite, where
$$
\nm a{\WL _{g,\theta}^{q,p}}
\equiv
\Nm {\{ \nm {a}{L^q(U_j)}\cdot |U_j|^\theta\}
_{j\in\mathbf{Z}_+}}{\ell ^p(I)}.
$$
\end{enumerate}
\end{defn}

\par

% It follows that $\WL ^{q,p} (\rr d)$
% is independent of the choice of 
% $U$ in Definition \ref{Def:WienerLebDef}
% (see e.{\,}g. \cite{Gro2}).
% In the same way, $\WL _{g,\theta}^{q,p} (W)$ 
% is independent of the choice of admissible
% $g$-covering $\{ U_j\} _{j\in\mathbf{Z}_+}$ in
% Definition \ref{Def:WienerLebDef}
% (see Proposition \ref{Prop:WLNormIndep}
% below). 

We remark that $\WL _{g,\theta}^{q,p} (W)$ 
is a quasi-Banach space of order $\min (1,p,q)$,
and independent of the choice of admissible
$g$-covering $\{ U_j\} _{j\in\mathbf{Z}_+}$ in
Definition \ref{Def:WienerLebDef}
(cf. Proposition \ref{Prop:WLNormIndep}
below). In particular, it follows that
$\WL ^{q,p} (\rr d)$
is independent of the choice of 
$U$ in Definition \ref{Def:WienerLebDef}.
(This follows from \cite{Gro1} as well.)
If $p,q\ge 1$, then $\WL _{g,\theta}^{q,p}(W)$ 
is a Banach space.

\par

For $p\in (0,1]$ and $q\in (0,\infty ]$,
the choice of parameter $\theta =\frac 1p-\frac 1q$
in the $\WL _{g,\theta}^{q,p}$ spaces is of special interest. For this reason
we let
$$
\WL _{g}^{q,p}=\WL _{g,\theta}^{q,p}
\quad \text{when}\quad
\theta =\frac 1p-\frac 1q.
$$

\par

%%%%%%%%%%%%%%%%%%%%%%%%%%%%%%%%%%
\section{Structural properties for
Wiener-Lebesgue spaces}\label{sec2}
%%%%%%%%%%%%%%%%%%%%%%%%%%%%%%%%%%

\par

In this section we show some
basic properties for
$\WL _{g,\theta}^{q,p}$-spaces.
First we show that such spaces are
invariantly defined with respect to
the choice of admissible
$g$-covering. Then we show that
such spaces increase if we replace
the metrics with
corresponding symplectic metrics.

\par

\begin{prop}\label{Prop:WLNormIndep}
Let $p,q\in (0,\infty ]$, $\theta \in \mathbf R$
and $g$ be slowly varying on $W$.
Then $\WL ^{q,p}_{g,\theta}(W)$
is independent of the choice of admissible
$g$-covering $\{ U_j\} _{j\in\mathbf{Z}_+}$ in
Definition \ref{Def:WienerLebDef}.
\end{prop}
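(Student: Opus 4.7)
The plan is to show that, for any two admissible $g$-coverings of $W$, the induced quasi-norms on $\WL^{q,p}_{g,\theta}$ are equivalent; the proposition then follows. Denote the two coverings by $\{U_j\}_{j\in\mathbf{Z}_+}$, centered at $X_j$ with radius $R_1$, and $\{V_k\}_{k\in\mathbf{Z}_+}$, centered at $Y_k$ with radius $R_2$, and set $\beta_j \equiv \nm a{L^q(U_j)}\cdot |U_j|^{\theta}$, $\alpha_k \equiv \nm a{L^q(V_k)}\cdot |V_k|^{\theta}$. By symmetry, it suffices to prove $\nmm{\{\alpha_k\}}_{\ell^p} \le C\,\nmm{\{\beta_j\}}_{\ell^p}$ with $C$ independent of $a$.

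The argument rests on three local facts, all derived from slow variation \eqref{Eq:Slowly}. \emph{Overlap count:} writing $J_k = \{j : U_j \cap V_k \neq \emptyset\}$, the same volume-versus-separation argument used to establish Lemma~\ref{Lemma:AdmCover}~(iii) and Remark~\ref{Rem:AdmCover1} yields $\sup_k |J_k| \le N_0$ and, symmetrically, $M_j \equiv |\{k : U_j \cap V_k \neq \emptyset\}| \le N_1$, with constants depending only on $c,C,d$. \emph{Measure comparison:} for $j \in J_k$, slow variation gives $g_{X_j} \asymp g_{Y_k}$ as quadratic forms; combined with the diagonalization \eqref{e1.0} and the resulting volume formula $|U_{g,X,R}| = c_d R^{2d}/\sqrt{\det g_X}$ in symplectic coordinates, this implies $|V_k|^\theta \asymp |U_j|^\theta$. \emph{Norm domination:} since $\{U_j\}$ covers $W$, we have $V_k \subseteq \bigcup_{j \in J_k} U_j$, so for $q < \infty$ a direct integration gives $\nm a{L^q(V_k)}^q \le \sum_{j\in J_k} \nm a{L^q(U_j)}^q$, and the analogous estimate $\nm a{L^\infty(V_k)} \le \max_{j\in J_k} \nm a{L^\infty(U_j)}$ holds for $q = \infty$.

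Combining the three, one obtains the pointwise bound $\alpha_k \lesssim \max_{j \in J_k} \beta_j$, uniformly in $k$. To pass to the $\ell^p$-norm for $p < \infty$, raise to the power $p$ and use $(\max_j x_j)^p \le \sum_j x_j^p$ to get $\alpha_k^p \lesssim \sum_{j \in J_k} \beta_j^p$. Summing in $k$ and exchanging the order of summation introduces the multiplicity $M_j$, which is bounded by $N_1$, so $\sum_k \alpha_k^p \lesssim \sum_j \beta_j^p$. The case $p = \infty$ is immediate from the pointwise bound. Symmetry in the two coverings then yields the reverse inequality, completing the proof.

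I expect the main technical point to be the verification of the two multiplicity bounds in a form uniform across all admissible coverings, not just two of the same radius. The slow variation constants $c,C$ are intrinsic to $g$, but the radii $R_1, R_2$ range independently over $(\sqrt{c/2},\sqrt{c})$, and one must check that the arguments behind Lemma~\ref{Lemma:AdmCover}~(iii) and Remark~\ref{Rem:AdmCover1} yield bounds depending only on $c,C,d$ and this allowed range of radii. The measure comparison and the final double summation are then routine, and work without modification in the quasi-Banach range $p \in (0,1]$, precisely because the decisive pointwise bound $\alpha_k \lesssim \max_{j\in J_k}\beta_j$ does not rely on any subadditivity of $\nm \cdot{\ell^p}$.
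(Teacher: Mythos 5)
Your proof is correct, but it takes a genuinely different route from the paper's. The paper first reduces to the case $q=1$, $p\le 1$ via the substitution $b=|a|^{q}$ (and therefore only treats $p\le q<\infty$ explicitly, leaving the remaining cases to the reader), and then inserts a smooth partition of unity $\sum_k\varphi_k=1$ with $\supp\varphi_k\subseteq V_k$ taken from H\"ormander's Lemma 18.4.4, exploiting the $p$-subadditivity $(\sum x_k)^{p}\le\sum x_k^{p}$ to distribute the $\mathcal U$-blocks over the $\mathcal V$-blocks before swapping the order of summation. You instead work directly with the block quasi-norms and establish the pointwise domination $\alpha_k\lesssim\max_{j\in J_k}\beta_j$ from the set-theoretic inclusion $V_k\subseteq\bigcup_{j\in J_k}U_j$, which dispenses with the partition of unity and the reduction step entirely and handles all $p,q\in(0,\infty]$ (including $q=\infty$ and $p>q$) in one pass; this is more elementary and more uniform, whereas the paper's partition-of-unity device has the expository advantage of being reused later (in the proofs of Theorem \ref{Thm:SymbClassEmbd} and Lemma \ref{Lemma:m0uppsk}). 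Both arguments ultimately rest on the same two geometric inputs — the bounded cross-overlap between the two coverings and the comparability $|U_j|\asymp|V_k|$ on intersecting blocks — and you are right to single out the uniformity of the overlap constant over the admissible range of radii $R\in(\sqrt{c/2},\sqrt{c})$ as the point requiring care: the paper cites Remark \ref{Rem:AdmCover1} for this, which strictly speaking treats a small ball against a covering rather than two full-size coverings, so your explicit volume-versus-separation verification (centers $X_j\in J_k$ all lie in a fixed $g_{Y_k}$-ball while remaining $g$-separated) is, if anything, a more honest account of what is needed. One small point worth writing out in a final version: in the step $\alpha_k\lesssim\max_{j\in J_k}\beta_j$ you implicitly use that $|V_k|^{\theta}\asymp|U_j|^{\theta}$ holds uniformly for \emph{every} $j\in J_k$ (not merely some $j$), so that the weight can be attached to whichever index realizes the maximum; this is indeed guaranteed by slow variation, but it deserves a sentence.
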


\par

\begin{rem}
Since $\WL ^{q,p}_{g,\theta}(W)$ is defined
through quasi-norm estimates, it follows
from Proposition \ref{Prop:WLNormIndep}
that different admissible coverings give
rise to equivalent quasi-norms for
$\WL ^{q,p}_{g,\theta}(W)$.
\end{rem}

\par

\begin{proof}[Proof of Proposition 
\ref{Prop:WLNormIndep}]
We only prove the result when $p\le q<\infty$.
The other cases follow by similar arguments and
are left to the reader.
By considering $b(X)=|a(X)|^q$,
we reduce ourselves to the case when
$q=1$ and $p\le 1$. We may also replace $\theta$
by $\theta /p$.

\par

Let $\maclU = \{ U_j\}_{j\in\mathbf{Z}_+}$ and
$\maclV= \{ V_k\}_{k\in \mathbf{Z}_+}$ be admissible
$g$-coverings, let
$$
\nm a {\maclU}^p = \sum_{j=0}^\infty \left(\int_{U_j} |a(X)|
\, dX \right)^p |U_j|^{\theta},
$$
and let
$$
\nm a {\maclV}^p = \sum_{k=0}^\infty \left( \int_{V_k}  | a(X)| \, dX \right)^p |V_k|^{\theta}.
$$
By \cite[Lemma 18.4.4]{Ho3}, there is
a bounded sequence $\{ \fy _k\} _{k=0}^\infty$ 
in $S(1,g)$ such that $\fy _k\ge 0$,
$\supp \fy _k\subseteq V_k$ for every $k$,
and $\sum _{k=0}^\infty \fy _k = 1$.

\par

We have
\begin{align*}
\nm a {\maclU}^p
&=
\sum_{j=0}^\infty \left(\int_{U_j} |a(X)|
\, dX \right)^p |U_j|^{\theta}
\\[1ex]
&\asymp
\sum_{j=0}^\infty \left( \int_{U_j} \sum_{k=0}^\infty 
|\varphi_k(x) a(X)| \, dX \right)^p |U_j|^{\theta}
\\[1ex]
&\leq
\sum_{j=0}^\infty \sum_{k=0}^\infty \left( \int_{U_j} 
|\varphi_k(x) a(X)| \, dX \right)^p |U_j|^{\theta}
\\[1ex]
&\asymp
\sum_{j=0}^\infty \sum_{k=0}^\infty \left( \int_{U_j} 
|\varphi_k(x) a(X)| \, dX \right)^p |V_k|^{\theta},
\intertext{where the last relation follows
from the fact that
$|U_j|\asymp |V_k|$ when $U_j\cap V_k\neq \emptyset$
in combination with the fact that $g$ is slowly
varying.
Since there is an upper bound of 
intersections
between $U_j$ and $V_k$ in view of
Remark \ref{Rem:AdmCover1}, we obtain}
\nm a {\maclU}^p
&\lesssim
\sum_{k=0}^\infty \left( \sum_{j=0}^\infty
\left( \int_{U_j}  |\varphi_k(x) a(X)| \, dX 
\right)^p |V_k|^{\theta}\right)
\\[1ex]
&\asymp \sum_{k=0}^\infty \left( \int_{W}  |\varphi_k(x) a(X)| \, dX \right)^p |V_k|^{\theta}
\\[1ex]
& \leq
\sum_{k=0}^\infty \left( \int_{V_k}  | a(X)| \, dX \right)^p |V_k|^{\theta}
\\[1ex]
& = \nm a {\maclV}^p.\qedhere
\end{align*}
\end{proof}

\par

% For $p\in (0,1]$, we especially let $\theta =\frac 1p-1$
% in the $\WL _{g,\theta}^{1,p}$ spaces. For this reason
% we let
% $$
% \WL _{g}^{1,p}=\WL _{g,\theta}^{1,p}
% \quad \text{when}\quad
% \theta =\frac 1p-1.
% $$

% Moved the previous part to just after Definition
% 1.4.

\par

% \begin{lemma}\label{bnum}
% Let $\{ U_{X_j,R}\} _{j=0}^\infty$ be an
% admissible $g$-covering of $W$. Then there
% exists an admissible $g^{{}_0}$-covering
% $\{ U_k^0\} _{k=0}^\infty$ of
% $W$ given by
% $$
% U_k^0 =\sets {X\in W}{g_{Y_k}(X-Y_k)<r^2},
% \quad
% k\in \mathbf Z_+
% $$
% such that, if for each $j \in \mathbf Z_+$
% we denote by $N_j$ the number of $U_k^0$
% intersecting $U_{X_j,R}$, then $N_j \leq
% C |U_{X_j,R}|$, for some constant
% $C>0$ independent of $j$.
% \end{lemma}

\par

Next we show that $\WL _{g}^{q,p}(W)$
is contained in $\WL _{g^{{}_0}}^{q,p}(W)$, when
$g$ is feasible.
For that reason we need the following proposition.

\par

\begin{prop}\label{bnum}
Let $g$ be a slowly varying metric on $W$,
$G$ be a $g$-continuous metric such that $g\le G$,
and let $\{ U_{X_j,R}\} _{j=1}^\infty$ be an
admissible $g$-covering of $W$. Then there
exists an admissible $G$-covering
%$\{ U_{G,k}\} _{k=0}^\infty$ 
%
%=\sets {X\ an admissible $G$-covering
$\{ U_{G,k}\} _{k=1}^\infty$ of
$W$ given by
$$
U_{G,k}=U_{G,Y_k,r}
=
\sets {X\in W}{G_{Y_k}(X-Y_k)<r^2},
\quad
k\in \mathbf Z_+
$$
such that
\begin{equation}\label{Eq:WLgEmb}
C_1 \frac {|U_{X_j,R}|}{|U_{G,X_j,r}|}
\leq
N_j
\leq
C_2 \frac {|U_{X_j,R}|}{|U_{G,X_j,r}|}
\end{equation}
when
$N_j$ is the number of $U_{G,k}$
intersecting $U_{X_j,R}$, and the constants
$C_1,C_2>0$ are independent of $j\in \mathbf Z_+$.
\end{prop}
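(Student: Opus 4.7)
\medskip

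\noindent\textbf{Proof plan.} The plan is to apply Lemma~\ref{Lemma:AdmCover} to $G$, then to exploit the geometric compatibility between the $g$- and $G$-coverings in each region where the two metrics are essentially frozen, and finally to deduce \eqref{Eq:WLgEmb} by a volume-counting argument.

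First, I would verify that $G$ is itself slowly varying, which is not directly assumed but follows from the hypotheses. Indeed, if $G_X(Y-X) \le c_0$ with $c_0$ smaller than the $g$-continuity constant $c$ in \eqref{Eq:gContMetric}, then $g_X(Y-X) \le G_X(Y-X) \le c$ since $g \le G$, and $g$-continuity of $G$ gives $G_Y \asymp G_X$. Hence Lemma~\ref{Lemma:AdmCover} produces an admissible $G$-covering $\{U_{G,k}\}_{k=1}^\infty = \{U_{G,Y_k,r}\}_{k=1}^\infty$ of any sufficiently small fixed radius $r>0$, and I fix such an $r$ small enough to accommodate the forthcoming estimates.

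Next, I would establish the key comparison: if $U_{G,k}\cap U_{X_j,R}\ne\emptyset$, then $G_{Y_k}\asymp G_{X_j}$ uniformly in $j,k$. Picking $Z$ in the intersection, slow-variation of $g$ on $U_{X_j,R}$ gives $g_Z\asymp g_{X_j}$. From $G_{Y_k}(Z-Y_k)<r^2$ and $g\le G$, one has $g_{Y_k}(Z-Y_k)<r^2$, whence (for $r$ small) slow-variation of $g$ yields $g_Z\asymp g_{Y_k}$. Combining, $g_{X_j}\asymp g_{Y_k}$, and in particular $g_{X_j}(Y_k-X_j)$ is bounded by a constant depending only on $R$, $r$, and the slow-varying constants of $g$. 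Shrinking $R,r$ if necessary so that this bound does not exceed the $g$-continuity constant for $G$, $g$-continuity of $G$ applied at $X_j$ gives $G_{Y_k}\asymp G_{X_j}$, and in particular $|U_{G,k}|\asymp |U_{G,X_j,r}|$.

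The upper bound in \eqref{Eq:WLgEmb} now follows from a packing argument. By the triangle inequality for $g_{X_j}^{1/2}$, every $U_{G,k}$ that intersects $U_{X_j,R}$ is contained in an enlarged $g$-ball $U_{X_j,R'}$ with $R'=R+O(r)$, whose volume is still $\asymp|U_{X_j,R}|$ by slow-variation of $g$. The bounded-overlap property (iii) of Lemma~\ref{Lemma:AdmCover} applied to the $G$-covering then gives
\begin{equation*}
N_j\cdot|U_{G,X_j,r}|\asymp\sum_{k:\,U_{G,k}\cap U_{X_j,R}\ne\emptyset}|U_{G,k}|\lesssim|U_{X_j,R'}|\asymp|U_{X_j,R}|,
\end{equation*}
which yields the upper bound. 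For the lower bound, the covering property (ii) of Lemma~\ref{Lemma:AdmCover} gives $U_{X_j,R}\subseteq\bigcup_{k:\,U_{G,k}\cap U_{X_j,R}\ne\emptyset}U_{G,k}$, so
\begin{equation*}
|U_{X_j,R}|\le\sum_{k:\,U_{G,k}\cap U_{X_j,R}\ne\emptyset}|U_{G,k}|\asymp N_j\cdot|U_{G,X_j,r}|.
\end{equation*}
The main obstacle will be the radius tuning in the comparison step: the slow-varying constants for $g$ and the $g$-continuity constants for $G$ must all be respected simultaneously along the chain from $X_j$ to $Z$ to $Y_k$, so $R$ and $r$ have to be chosen small enough (but positive) for every link. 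Once this tuning is fixed, the remainder is a routine volume-counting exercise.
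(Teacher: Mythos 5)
Your proposal is correct and follows essentially the same route as the paper: construct an admissible $G$-covering of small radius $r$ so that each intersecting $G$-ball sits inside a slightly enlarged $g$-ball, show $|U_{G,k}|\asymp|U_{G,X_j,r}|$ via $g$-continuity of $G$, and then obtain the upper bound from bounded overlap and the lower bound from the covering property. Your explicit verification that $G$ is slowly varying (so that Lemma~\ref{Lemma:AdmCover} applies to it) is a point the paper leaves implicit, but otherwise the two arguments coincide.
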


\par

\begin{proof}
Let $U_{X_j,R_1}$ and $U_{X_j,R_2}$ be as in Remark
\ref{Rem:AdmCover1}. If $r>0$ is chosen small enough,
then there is an admissible $G$-covering of
$W$, given by
$$
U_{G,k} =\sets {X\in W}{G_{Y_k}(X-Y_k)<r^2},
\quad
k\in \mathbf Z_+
$$
such that $U_{G,k} \subseteq U_{X_j,R_2}$ when
$U_{G,k}$ intersects $U_{X_j,R_1}$. The facts that
$G$ is $g$-continuous and $g\le G$ guarantees that such 
$r$ exists. Also, let $\Omega _j$ be the set of
all $k\in \mathbf Z_+$ such that $U_{G,k}$ intersects
$U_j$ and let $N_j=|\Omega _j|$.
% We claim that
% there are constants
% $C_1,C_2>0$ which are independent of $j$ such that
% %%
% \begin{equation}\label{Eq:WLgEmb}
% C_1|U_j|\le N_j \le C_2|U_j|.
% \end{equation}
% %%

\par

% In fact we have
We have
$$
U_{X_j,R_1}
\subseteq
\bigcup _{k\in \Omega _j}U_{G,k}
\subseteq
U_{X_j,R_2}.
$$
Since the balls $\{U_{G,k}\}_{k\in I}$ form an admissible
covering of $W$, there is an upper bound $M$ of
overlapping $U_{G,k}$. This gives
$$
\frac 1M\sum _{k\in \Omega _j}|U_{G,k}|
\le |U_{X_j,R_2}|
\quad \iff \quad
\sum _{k\in \Omega _j}|U_{G,k}|\le M|U_{X_j,R_2}|.
$$
Since $G$ is $g$-continuous, we have
$$
C_3|U_{G,X_j,r}|
\le
|U_{G,k}|
\le
C_4|U_{G,X_j,r}|
$$
for some constants $C_3,C_4>0$ which are independent
of $k$. A combination of these estimates gives
$$
{C_3} N_j\, |U_{G,X_j,r}|
=
{C_3} |\Omega _j|\, |U_{G,X_j,r}|
\le 
\sum _{k\in \Omega _j}|U_{G,k}|
\le M|U_{X_j,R_2}|,
$$
which leads to the second inequality in
\eqref{Eq:WLgEmb}.

\par

We also have
$$
|U_{X_j,R_1}|\le
\left |
\bigcup _{k\in \Omega _j}U_{G,k}
\right |
\le
\sum _{k\in \Omega _j}|U_{G,k}|
\le C_4N_j|U_{G,X_j,r}|,
$$
giving the first inequality in \eqref{Eq:WLgEmb},
giving the result.
\end{proof}

\par

Since all $g$-balls are of the same size when
$g$ is symplectic, the previous proposition takes the following form.

\par

\begin{cor}\label{Cor:bnum}
Let $g$ be a feasible metric on $W$, and
let $\{ U_{X_j,R}\} _{j=1}^\infty$ be an
admissible $g$-covering of $W$. Then there
exists an admissible $g^{{}_0}$-covering
$\{ U_k^0\} _{k=1}^\infty$ of
$W$ given by
$$
U_k^0 =\sets {X\in W}{g_{Y_k}(X-Y_k)<r^2},
\quad
k\in \mathbf Z_+
$$
such that $N_j \leq C |U_{X_j,R}|$, where
$N_j$ is the number of $U_k^0$
intersecting $U_{X_j,R}$, and the constant
$C>0$ is independent of $j\in \mathbf Z_+$.
\end{cor}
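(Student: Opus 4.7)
The plan is to apply Proposition \ref{bnum} with $G=g^{{}_0}$, and then exploit the fact that every $g^{{}_0}$-ball of a fixed radius has the same symplectic volume, so that the right-hand side of \eqref{Eq:WLgEmb} collapses to a constant multiple of $|U_{X_j,R}|$.

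First I would verify the two hypotheses required of $G=g^{{}_0}$ in Proposition \ref{bnum}. The inequality $g\le g^{{}_0}$ follows from the uncertainty principle $h_g\le 1$ built into feasibility: in symplectic coordinates that diagonalize $g_X$ as in \eqref{e1.0}, one has $\lambda_j(X)\le\lambda_1(X)=h_g(X)\le 1$, whence
\[
g_X(Z)=\sum_{j=1}^d\lambda_j(X)(z_j^2+\zeta_j^2)
\le
\sum_{j=1}^d(z_j^2+\zeta_j^2)=g^{{}_0}_X(Z).
\]
For the $g$-continuity of $g^{{}_0}$, the idea is to combine the slow variation of $g$ with the symplectic invariance of the construction $g\mapsto g^{{}_0}$: if $g_X(Y-X)\le c$ for $c$ small enough, then $g_Y\asymp g_X$ and, dually, $g^\sigma_Y\asymp g^\sigma_X$, so the ordered eigenvalue sequences $\{\lambda_j(X)\}$ and $\{\lambda_j(Y)\}$ are comparable; a careful comparison of the diagonalizing symplectic frames at $X$ and $Y$ then yields $g^{{}_0}_Y\asymp g^{{}_0}_X$. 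This step is the main (and essentially only) obstacle, the rest being a bookkeeping exercise.

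Once these hypotheses are in place, Proposition \ref{bnum} provides an admissible $g^{{}_0}$-covering $\{U_k^0\}$ centred at suitable points $Y_k$, together with the bound
\[
N_j\le C_2\,\frac{|U_{X_j,R}|}{|U_{g^{{}_0},X_j,r}|}.
\]
The concluding observation is that $g^{{}_0}_{X_j}$ is a symplectic metric (all its eigenvalues equal one), so in any symplectic coordinates diagonalizing $g^{{}_0}_{X_j}$ the ball $U_{g^{{}_0},X_j,r}$ is an ordinary Euclidean ball of radius $r$ in $\rr{2d}$. Since the symplectic volume form is independent of the choice of symplectic coordinates, $|U_{g^{{}_0},X_j,r}|$ equals a positive constant $c_r$ depending only on $r$ and $d$, but not on $j$. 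Dividing then yields $N_j\le (C_2/c_r)|U_{X_j,R}|$, which is the claimed inequality.
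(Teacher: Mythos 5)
Your proposal is correct and follows essentially the same route as the paper: the corollary is obtained by specializing Proposition \ref{bnum} to $G=g^{{}_0}$ (the paper likewise asserts, without detailed proof, that $g\le g^{{}_0}$ and that $g^{{}_0}$ is $g$-continuous when $g$ is feasible) and then using that all $g^{{}_0}$-balls of radius $r$ have the same symplectic volume, so the denominator in \eqref{Eq:WLgEmb} is a constant. Your explicit verification of $g\le g^{{}_0}$ via $\lambda_j(X)\le h_g(X)\le 1$ is a correct filling-in of a step the paper leaves implicit.
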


\par

\begin{prop}\label{Prop:WLgEmb}
Let $g$ be a slowly varying metric on $W$ and let
$G$ be a $g$-continuous metric such that
$g\le G$. Also, suppose that
and $0<p\leq q < \infty$.
Then
$$
\WL _{g}^{q,p}(W)
\subseteq
\WL _G^{q,p}(W).
$$
\end{prop}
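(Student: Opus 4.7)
The plan is to dominate the $\WL_G^{q,p}$-norm by the $\WL_g^{q,p}$-norm by subordinating each $G$-ball in an admissible $G$-covering to a $g$-ball that contains it, and then applying H\"older's inequality on the inner sum indexed by $G$-balls with exponent $s = p/q \in (0,1]$. The hypothesis $\theta = \tfrac{1}{p} - \tfrac{1}{q}$ is precisely what will force the volume factors to balance out.

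First I would fix an admissible $g$-covering $\{ U_j \} = \{ U_{X_j,R_1} \}$ of $W$ and invoke Proposition \ref{bnum} (and its proof) to obtain an admissible $G$-covering $\{ U_{G,k} \}$, of sufficiently small $G$-radius $r$, such that whenever $U_{G,k}$ meets some $U_j$ one has $U_{G,k} \subseteq U_{X_j,R_2}$ (for some $R_1 < R_2$ as in Remark \ref{Rem:AdmCover1}), together with the quantitative bound \eqref{Eq:WLgEmb} on the number $N_j$ of $U_{G,k}$ meeting $U_j$. For each $k$ I would then select $j = j(k)$ so that $U_{G,k} \subseteq U_{X_{j(k)},R_2}$; by \eqref{Eq:WLgEmb}, $\#\{ k : j(k) = j \} \le N_j \lesssim |U_j| / |U_{G,X_j,r}|$.

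For each fixed $j$, H\"older's inequality applied to the sum of the $N_j$ nonnegative terms with exponent $s = p/q \in (0,1]$ gives
$$
\sum_{k : j(k) = j} \nm{a}{L^q(U_{G,k})}^p
\le
N_j^{1 - p/q}
\biggl( \sum_{k : j(k) = j} \nm{a}{L^q(U_{G,k})}^q \biggr)^{p/q}.
$$
The inner $L^q$-sum is bounded by a constant multiple of $\nm{a}{L^q(U_{X_j,R_2})}^q$, by the bounded-overlap property in Lemma \ref{Lemma:AdmCover} (iii) applied to the $G$-covering together with the inclusion $U_{G,k} \subseteq U_{X_j,R_2}$ for $j(k) = j$. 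Since $G$ is $g$-continuous, $|U_{G,k}| \asymp |U_{G,X_j,r}|$ whenever $j(k) = j$, and with $\theta = 1/p - 1/q$, so that $p\theta = 1 - p/q$, I then obtain
$$
\sum_{k : j(k) = j} \nm{a}{L^q(U_{G,k})}^p |U_{G,k}|^{p\theta}
\lesssim
N_j^{1 - p/q} \, |U_{G,X_j,r}|^{1 - p/q} \,
\nm{a}{L^q(U_{X_j,R_2})}^p
\lesssim
|U_j|^{p\theta} \, \nm{a}{L^q(U_{X_j,R_2})}^p,
$$
where the last step uses \eqref{Eq:WLgEmb}. Summing over $j$ yields $\nm{a}{\WL_G^{q,p}}^p \lesssim \sum_j |U_j|^{p\theta} \nm{a}{L^q(U_{X_j,R_2})}^p$, which is equivalent to $\nm{a}{\WL_g^{q,p}}^p$ by Proposition \ref{Prop:WLNormIndep}.

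The main obstacle is the volume balancing in the displayed estimate. The precise exponent $p\theta = 1 - p/q$ is exactly what allows the quantitative relation $N_j \, |U_{G,X_j,r}| \asymp |U_j|$ from \eqref{Eq:WLgEmb} to convert the factor $N_j^{1-p/q} |U_{G,X_j,r}|^{p\theta}$ into $|U_j|^{p\theta}$; with any other choice of $\theta$ one would be left with an uncontrollable power of $|U_j|/|U_{G,X_j,r}|$, which can blow up when $G$ is much larger than $g$ so that the $G$-balls are far smaller than the $g$-balls.
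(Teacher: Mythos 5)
Your proposal is correct and follows essentially the same route as the paper's proof: subordinate the $G$-balls to the $g$-balls via Proposition \ref{bnum}, apply the concavity/H\"older estimate with exponent $p/q$ to the inner sum over $G$-balls, and use \eqref{Eq:WLgEmb} together with $\theta=\tfrac1p-\tfrac1q$ to balance the volume factors. The only differences are cosmetic (you partition the $G$-balls by a choice function $j(k)$ and keep the $L^q$-norms, while the paper sums over all intersecting pairs after substituting $b=|a|^q$ to reduce to $q=1$).
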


\par

\begin{proof}
Let $p_0 = \frac pq \in (0,1]$ and $b(X) = |a(X)|^q$.
The inequalities in \eqref{Eq:WLgEmb} shall
be combined with
\begin{equation}\label{Eq:ConcavitEst}
\sum _{k=1}^Nx_k^{p_0} \le N^{1-p_0}
\left (
\sum _{k=1}^Nx_k
\right )^{p_0},
\qquad
x_1,\dots ,x_N\ge 0,
\end{equation}
which follows by concavity of $t\mapsto t^{p_0}$.

\par

% Let $r$, $R_1$, $R_2$ $U_{G,k}$, $N_j$, and
% $\Omega _j$ be as in
We use the same notations as in
the proof of Proposition \ref{bnum}.
Since $\nm a{\WL^{q,p}_G}^p 
=\nm b{\WL^{1,p_0}_G}^{p_0}$, we obtain
\begin{align*}
\nm a{\WL^{q,p}_G}^p 
&\asymp
\sum _{k=1}^\infty
\left (
\int _{U_{G,k}}|b(X)|\, dX
\right )^{p_0}|U_{G,k}|^{1-p_0}
\\[1ex]
&\le
\sum _{j=1}^\infty
\left (
\sum _{k\in \Omega _j}
\left (
\int _{U_{G,k}}|b(X)|\, dX
\right )^{p_0}|U_{G,k}|^{1-p_0}
\right )
\\[1ex]
&\le
\sum _{j=1}^\infty
\left (
|\Omega _j|^{1-p_0}
\left (
\sum _{k\in \Omega _j}\int _{U_{G,k}}|b(X)|\, dX
\right )^{p_0}|U_{G,k}|^{1-p_0}
\right ),
\end{align*}
where the last inequality follows from
\eqref{Eq:ConcavitEst}. Since there is a bound $M$
of overlapping $U_{G,k}$,
$$
|U_{X_j,R_1}|\asymp |U_{X_j,R_2}|,
\quad \text{and}\quad
|U_{G,k}|\asymp |U_{G,X_j,r}|,
$$
when $U_{G,k}$
% $|U_{X_j,R_1}|\asymp |U_{X_j,R_2}|$, and that
% $|U_{G,k}|\asymp |U_{G,X_j,r}|$ when $U_{G,k}$
intersects with $U_{X_j,R_1}$,
Proposition \ref{bnum} gives
\begin{align*}
\nm a{\WL^{q,p}_G}^p
&\lesssim
\sum _{j=1}^\infty
\left (
\left (
\frac {|U_{X_j,R_2}|}{|U_{G,X_j,r}|}
\right )^{1-p_0}
\left (
\sum _{k\in \Omega _j}\int _{U_{G,k}}|b(X)|\, dX
\right )^{p_0}|U_{G,X_j,r}|^{1-p_0}
\right )
\\[1ex]
% &\lesssim
% \sum _{j=1}^\infty
% \left (
% \left (
% M\int _{U_{X_j,R_2}}|b(X)|\, dX
% \right )^{p_0}|U_{X_j,R}|^{1-p_0}
% \right )
% \\[1ex]
&\le
\sum _{j=1}^\infty
\left (
\left (
M\int _{U_{X_j,R_2}}|b(X)|\, dX
\right )^{p_0}|U_{X_j,R_2}|^{1-p_0}
\right )
% \\[1ex]
% &\lesssim
% \sum _{j=1}^\infty
% \left (
% |U_j|^{1-p_0}
% \left (
% M\int _{U_{X_j,R_2}}|b(X)|\, dX
% \right )^{p_0}
% \right )
% \\[1ex]
% &\lesssim
% \sum _{j=1}^\infty
% \left (
% |U_{X_j,R_2}|^{1-p_0}
% \left (
% \int _{U_{X_j,R_2}}|b(X)|\, dX
% \right )^{p_0}
% \right )
\\[1ex]
&\asymp
\nm b{\WL ^{1,p_0}_{g}}^{p_0} 
= \nm a{\WL^{q,p}_g}^p\ ,
\end{align*}
and the result follows from these estimates.
\end{proof}

\par

Since $g^{{}_0}$ is $g$-continuous and
$g\le g^{{}_0}$ whenever $g$ is feasible,
the following corollary is an immediate consequence
of Proposition \ref{Prop:WLgEmb}.

\par 

\begin{cor}\label{Cor:WLgEmb}
Let $g$ be feasible on $W$ and $0<p\leq q < \infty$.
Then
$$
\WL _{g}^{q,p}(W)
\subseteq
\WL _{g^{{}_0}}^{q,p}(W).
$$
\end{cor}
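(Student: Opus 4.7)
The strategy is to apply Proposition \ref{Prop:WLgEmb} with the choice $G = g^{{}_0}$. The proof then reduces to verifying the three hypotheses of that proposition for this choice: that $g$ is slowly varying (immediate, as feasibility includes slow variation), that $g \leq g^{{}_0}$, and that $g^{{}_0}$ is $g$-continuous.

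For the pointwise comparison $g \leq g^{{}_0}$, I would use that feasibility includes the uncertainty principle \eqref{Eq:UnsertPrincMetric}, so $h_g(X) \leq 1$ for every $X \in W$. Taking symplectic coordinates $(z,\zeta)$ diagonalizing $g_X$ as in \eqref{e1.0}, this gives $\lambda_1(X) = h_g(X) \leq 1$, and hence by the decreasing ordering \eqref{e1.00} every $\lambda_j(X) \leq 1$. Consequently
$$
g_X(Z) \, = \, \sum _{j=1}^d \lambda _j(X)(z_j^2+\zeta _j^2)
\, \leq \,
\sum _{j=1}^d (z_j^2+\zeta _j^2) \, = \, g^{{}_0}_X(Z).
$$

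For the $g$-continuity of $g^{{}_0}$, I would argue that since $g^{{}_0}$ is built from the same diagonalizing symplectic structure as $g$ but with all eigenvalues replaced by $1$, the slow variation of $g$ transfers to $g^{{}_0}$. More concretely, when $g_X(Y-X)$ is small, the slow variation \eqref{Eq:Slowly} gives $C^{-1}g_Y \leq g_X \leq Cg_Y$ as quadratic forms on $W$; taking the canonical symplectic normalization of both sides yields $C^{-1}g^{{}_0}_Y \leq g^{{}_0}_X \leq Cg^{{}_0}_Y$ up to a uniform constant, which is exactly \eqref{Eq:gContMetric} for the pair $(g, g^{{}_0})$.

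With these two facts in hand, Proposition \ref{Prop:WLgEmb} applied to $G = g^{{}_0}$ delivers $\WL _{g}^{q,p}(W) \subseteq \WL _{g^{{}_0}}^{q,p}(W)$ at once. The main (albeit minor) obstacle in writing this up rigorously is the $g$-continuity of $g^{{}_0}$, since the diagonalizing symplectic coordinates depend on the base point; however, this dependence is controlled by the slow variation of $g$ in a now-standard way, so no new ideas are required beyond those already used in the proof of Proposition \ref{Prop:WLgEmb}.
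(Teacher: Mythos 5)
Your proposal is correct and follows exactly the paper's route: the corollary is obtained by applying Proposition \ref{Prop:WLgEmb} with $G=g^{{}_0}$, after noting that feasibility gives $g\le g^{{}_0}$ (via $\lambda_j(X)\le h_g(X)\le 1$) and that $g^{{}_0}$ is $g$-continuous. You simply spell out the two hypothesis checks that the paper asserts in one sentence before the corollary.
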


% \par

% \begin{proof}
% Let $p_0 = \frac pq \in (0,1]$ and $b(X) = |a(X)|^q$.
% The inequalities in \eqref{Eq:WLgEmb} shall
% be combined with
% $$
% \sum _{k=1}^Nx_k^{p_0} \le N^{1-p_0}
% \left (
% \sum _{k=1}^Nx_k
% \right )^{p_0},
% \qquad
% x_1,\dots ,x_N\ge 0,
% $$
% which follows by concavity of $t\mapsto t^{p_0}$.
% In fact, we have
% %%
% \begin{align*}
% \nm a{\WL^{q,p}_{g^{{}_0}}}^p 
% =\nm b{\WL^{1,p_0}_{g^{{}_0}}}^{p_0}
% &\asymp
% \sum _{k=1}^\infty
% \left (
% \int _{U_k^0}|b(X)|\, dX
% \right )^{p_0}
% \\[1ex]
% &\le
% \sum _{j=1}^\infty
% \left (
% \sum _{k\in \Omega _j}
% \left (
% \int _{U_k^0}|b(X)|\, dX
% \right )^{p_0}
% \right )
% \\[1ex]
% &\le
% \sum _{j=1}^\infty
% \left (
% |\Omega _j|^{1-p_0}
% \left (
% \sum _{k\in \Omega _j}\int _{U_k^0}|b(X)|\, dX
% \right )^{p_0}
% \right )
% \\[1ex]
% &\lesssim
% \sum _{j=1}^\infty
% \left (
% |U_j|^{1-p_0}
% \left (
% M\int _{U_{X_j,R_2}}|b(X)|\, dX
% \right )^{p_0}
% \right )
% \\[1ex]
% &\lesssim
% \sum _{j=1}^\infty
% \left (
% |U_{X_j,R_2}|^{1-p_0}
% \left (
% \int _{U_{X_j,R_2}}|b(X)|\, dX
% \right )^{p_0}
% \right )
% \\[1ex]
% &\asymp
% \nm b{\WL ^{1,p_0}_{g}}^{p_0} 
% = \nm a{\WL^{q,p}_g}^p\ ,
% \end{align*}
% %%
% and the result follows from these estimates.
% \end{proof}

\par

%%%%%%%%%%%%%%%%%%%%%%%%%%%%%%%%
\section{Quasi-Banach Schatten-von
Neumann properties in
pseudo-differential calculus}\label{sec3}
%%%%%%%%%%%%%%%%%%%%%%%%%%%%%%%%

\par

In this section we deduce Schatten-von Neumann
properties, with respect to $p\in (0,1]$,
for pseudo-differential operators with symbols
in $S(m,g)$ and with $m$ or $a$ belonging to
$\WL^{1,p}_g(W)$. In Section \ref{sec31} we
deal with Weyl operators, where in the first part
the assumptions on $m$ and $g$ are minimal,
and the operators are acting on $L^2(V)$.
The second part of Section \ref{sec31} is
devoted to operators acting between (different)
Bony-Chemin Sobolev-type spaces $H(m,g)$.
Here, we restrict ourselves and
assume that $m$ and $g$ satisfy the usual
conditions in the Weyl-H{\"o}rmander calculus.
In Section \ref{sec32}, we
consider more general pseudo-differential
calculi, but with some additional restrictions
on $g$.

\par

\subsection{The case of H{\"o}rmander-Weyl
calculus}\label{sec31}

\par

\begin{thm}\label{Thm:SymbClassEmbd}
Let $p\in (0,1]$, $g$ be feasible on $W$, and
$m\in \WL _g^{1,p}(W)$ be a positive function on $W$.
Then $S(m,g)\subseteq s_p^w(W)$.
\end{thm}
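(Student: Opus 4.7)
The strategy is to localize $a$ via a partition of unity subordinate to an admissible covering of $W$, control each localized piece by a single-cell Schatten estimate, and reassemble with the $p$-triangle inequality in $\mascI_p$ (a quasi-Banach space of order $p$).

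\textbf{Reduction to a symplectic metric.} Feasibility of $g$, in particular $h_g\le 1$, gives $g\le g^{{}_0}$ and hence $S(m,g)\subseteq S(m,g^{{}_0})$, because the $S(m,g)$-seminorms take sup over strictly larger sets of unit vectors than the $S(m,g^{{}_0})$-seminorms. Combined with Corollary~\ref{Cor:WLgEmb}, which yields $\WL^{1,p}_g(W)\subseteq\WL^{1,p}_{g^{{}_0}}(W)$, this lets us assume from now on that $g=g^{{}_0}$ is symplectic; then every $g$-ball has the same fixed volume $|U|$.

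\textbf{Localization.} Pick an admissible $g$-covering $\{U_k\}_{k\in\mathbf Z_+}$ of $W$ and, via \cite[Lemma~18.4.4]{Ho3}, a partition of unity $\{\varphi_k\}_{k\in\mathbf Z_+}\subseteq S(1,g)$, uniformly bounded in $k$, with $\varphi_k\ge 0$, $\supp\varphi_k\subseteq U_k$, $\sum_k\varphi_k\equiv 1$. Write $a_k=\varphi_k a$, so $a=\sum_k a_k$, $a_k\in S(m,g)$, $\supp a_k\subseteq U_k$, with the normalized $S(m,g)$-seminorms of $a_k$ controlled uniformly in $k$ by those of $a$.

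\textbf{Local Schatten estimate and assembly.} The main technical step is the single-cell bound
\begin{equation*}
\|\op^w(a_k)\|_{\mascI_p(L^2(V))}\le C\,\|m\|_{L^1(U_k)},
\end{equation*}
with $C$ independent of $k$. Because $a_k$ lives in a unit symplectic cell, the uncertainty principle forces $\op^w(a_k)$ to be essentially of bounded rank, so all its Schatten quasi-norms for $p\in(0,\infty]$ are commensurate with its Hilbert–Schmidt norm $(2\pi)^{-d/2}\|a_k\|_{L^2}$; combined with the \emph{full} derivative bounds $|a_k^{(j)}|\le C_j m$ for every $j$ (e.g.\ via a short-time Fourier / coherent-state expansion that converts smoothness into decay of expansion coefficients), this yields the sharper $\|m\|_{L^1(U_k)}$-estimate. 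Summing via the $p$-subadditivity of $\mascI_p$,
\begin{equation*}
\|\op^w(a)\|_{\mascI_p}^p\le\sum_k\|\op^w(a_k)\|_{\mascI_p}^p\le C^p\sum_k\|m\|_{L^1(U_k)}^p\asymp\|m\|_{\WL^{1,p}_g(W)}^p,
\end{equation*}
where the last equivalence uses $|U_k|\asymp|U|$ and the definition of $\WL^{1,p}_g$ corresponding to $\theta=1/p-1$. Hence $\op^w(a)\in\mascI_p$, i.e.\ $a\in s^w_p(W)$.

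\textbf{Main obstacle.} The local Schatten estimate is the crux. The naive bound $\|\op^w(a_k)\|_{\mascI_p}\lesssim\sup_{U_k}m$ (obtainable from Calderón–Vaillancourt plus the essentially-bounded-rank reduction) would fail to sum correctly whenever $m$ is not $g$-continuous, so one has to genuinely exploit \emph{all} the derivative bounds of $a\in S(m,g)$ -- not just the pointwise dominance $|a_k|\le Cm$ -- and translate them into a Schatten-$p$ estimate in the quasi-Banach regime $p\in(0,1]$. This is typically achieved through Hilbert–Schmidt reasoning coupled with an essentially-finite-rank / coherent-state argument, or equivalently through a modulation-space characterization of $\mascI_p$; carrying this out rigorously with constants uniform in the $U_k$'s is the main technical work.
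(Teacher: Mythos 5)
Your overall architecture coincides with the paper's: reduce to the symplectic metric $g^{{}_0}$ via Corollary \ref{Cor:WLgEmb} and the monotonicity of $S(m,g)$ in $g$, localize with the partition of unity from \cite[Lemma 18.4.4]{Ho3}, prove a single-cell bound $\nm{\op^w(\fy_k a)}{\mascI_p}\lesssim \nmm{a}\,\nm{m}{L^1(U_k)}$, and reassemble by $p$-subadditivity. You also correctly identify that a $\sup_{U_k}m$-bound would not sum when $m$ fails to be $g$-continuous, so the $L^1(U_k)$-form of the local estimate is essential. Up to that point the proposal is sound.

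The gap is that the local estimate — which you yourself flag as ``the crux'' and ``the main technical work'' — is asserted rather than proved, and the mechanism you sketch for it does not deliver the stated bound. The claim that all Schatten quasi-norms of $\op^w(a_k)$ for $p\in(0,\infty]$ are ``commensurate with its Hilbert--Schmidt norm $(2\pi)^{-d/2}\nm{a_k}{L^2}$'' is not a theorem: support of the symbol in a unit symplectic cell does not make the operator finite rank, and making ``essentially bounded rank'' precise requires exactly the quantitative use of the derivative bounds that you defer; moreover an $L^2$-quantity of $a_k$ is not \emph{a priori} dominated by $\nm{m}{L^1(U_k)}$, so even granting the commensurability one would still need a Sobolev-type step to convert $\sum_{|\alpha|\le N}\nm{\partial^\alpha a_k}{L^1}\lesssim \nmm{a}\nm{m}{L^1(U_k)}$ into the Schatten bound. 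This is precisely what the paper supplies through Lemma \ref{Lemma:SchattenWienerSobEst}: writing $a_k=(1-\Delta)^{-N}\big((1-\Delta)^N a_k\big)=\fy *\big((1-\Delta)^N a_k\big)$ with $\fy=\mascF_\sigma\big((1+|\cdot|^2)^{-N}\big)\in s_p^w$, and invoking the convolution estimate $\nm{\fy * f}{s_p^w}\lesssim \nm{\fy}{s_p^w}\nm{f}{\WL^{1,p}}$ from \cite[Proposition 5.11]{BimTof} together with $(1+|X|^2)^{-N}\in s_p^w$ from \cite[Theorem 2.6]{Tof3}. A coherent-state or Gabor-frame expansion could in principle be made to work instead, but as written your argument stops exactly where the real proof begins, so the proposal is incomplete.
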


\par

For the proof we need the following lemma on
embeddings between $s_p^w(W)$ and
Sobolev-type spaces of distributions
with suitable numbers of derivatives belonging to
$\WL^{1,p}(W)$.

\par

\begin{lemma}\label{Lemma:SchattenWienerSobEst}
Let $p\in (0,1]$. Then there is an integer
$N\ge 1$ and a constant $C>0$ which only
depends on $p$ and the dimension of $W$ such that
$$
\nm a{s_p^w(W)}
\le
C\nm {(1-\Delta )^Na}{\WL ^{1,p}(W)}.
$$
\end{lemma}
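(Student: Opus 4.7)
The plan is to localize $a$ in phase space and reduce to a Schatten-$p$ estimate for compactly supported Weyl symbols, exploiting that $\mascI_p$ is a $p$-Banach space for $p\in(0,1]$. Fix a smooth partition of unity $\{\varphi_j\}_{j\in\zz{2d}}$ subordinate to the cover $\{j+U\}_{j\in\zz{2d}}$ used in defining $\WL^{1,p}(W)$, with all derivatives of $\varphi_j$ uniformly bounded in $j$, and set $a_j=a\varphi_j$. The $p$-subadditivity of $\nm{\cdot}{\mascI_p}^p$ gives
$$
\nm{\op^w(a)}{\mascI_p}^p\leq \sum_j\nm{\op^w(a_j)}{\mascI_p}^p,
$$
and by the translation covariance $\op^w(a(\cdot-Y))=\pi(Y)^{*}\op^w(a)\pi(Y)$ of the Weyl calculus (with $\pi(Y)$ the Heisenberg-Weyl translation on $L^2(V)$), each summand equals the $\mascI_p$-norm of a Weyl operator with symbol supported in a fixed bounded ball.

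The analytic core and main obstacle is the local estimate
$$
\nm{\op^w(b)}{\mascI_p}\leq C\nm{(1-\Delta)^N b}{L^1(W)}\qquad\text{for}\quad \supp b\subseteq U,
$$
with some $N=N(p,d)$. I would prove this via a short-time Fourier transform characterization of $\mascI_p$: for $p\in(0,1]$ and a Schwartz window $\Phi$, the quasi-norm $\nm{\op^w(b)}{\mascI_p}$ is equivalent to $\nm{V_\Phi b}{L^p(W\times W)}$, which one obtains from a Gabor expansion of $\op^w(b)$ into rank-one operators combined with the $p$-subadditivity of $\mascI_p$. Compact support of $b$ forces rapid decay of $V_\Phi b(X,Y)$ in $X$, while integration by parts yields $|Y^\alpha V_\Phi b(X,Y)|\lesssim \nm{\partial^\alpha(b\cdot T_X\Phi)}{L^1}$; choosing $|\alpha|$ so that $\langle Y\rangle^{-|\alpha|p}$ is integrable on $W$ and expanding by Leibniz then gives the required bound with $N$ of order $d/p$.

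To complete the argument I would pass from $\sum_j\nm{(1-\Delta)^N a_j}{L^1}^p$ back to $\nm{(1-\Delta)^{N'}a}{\WL^{1,p}(W)}^p$ for some $N'$ slightly larger than $N$. Leibniz expansion yields
$$
\nm{(1-\Delta)^N(a\varphi_j)}{L^1}\lesssim \sum_{|\alpha|\leq 2N}\nm{\partial^\alpha a}{L^1(j+U)},
$$
and the representation $\partial^\alpha a=(\partial^\alpha G_{2N'})*((1-\Delta)^{N'}a)$, where $G_{2N'}$ is the Bessel kernel with super-polynomial decay, combined with Young's inequality applied on the fixed-size ball $j+U$, gives
$$
\nm{\partial^\alpha a}{L^1(j+U)}\leq C\sum_k\langle j-k\rangle^{-M}\nm{(1-\Delta)^{N'}a}{L^1(k+U)}
$$
for arbitrary $M$. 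Taking $M$ with $Mp>2d$ and $\ell^p$-summing in $j$ (using $p\leq 1$) yields the stated inequality with a possibly enlarged $N$. Once the local estimate is in hand, this last step is routine bookkeeping; the genuine analytic difficulty is the STFT/Gabor analysis in the quasi-Banach range $p<1$, where the classical Banach-space duality is unavailable.
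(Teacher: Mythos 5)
Your argument is essentially correct, but it takes a genuinely different route from the paper. The paper's proof is three lines: it takes $b(X)=(1+|X|^2)^{-N}\in s_p^w(W)$ for $N$ large (quoting \cite[Theorem 2.6]{Tof3}), notes that $\fy=\mascF_\sigma b\in s_p^w(W)$ by symplectic Fourier invariance, writes $a\asymp\fy *\big((1-\Delta)^Na\big)$, and concludes by the convolution/module estimate $\nm{\fy *c}{s_p^w}\lesssim\nm{\fy}{s_p^w}\nm c{\WL^{1,p}}$ of \cite[Proposition 5.11]{BimTof}. All the analytic content is thus outsourced to those two citations. You instead localize the symbol over the lattice cover, use unitary conjugation by Heisenberg--Weyl operators and $p$-subadditivity of $\nm\cdo{\mascI_p}^p$ to reduce to a fixed ball, prove the local estimate via the embedding of the modulation space $M^p$ into $s_p^w$ (Gabor expansion into uniformly bounded rank-one Weyl operators), and reassemble with Bessel-kernel decay and discrete Young's inequality in $\ell^p$. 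This is longer but self-contained, and it in effect reproves the special case of the \cite{BimTof} convolution estimate that is actually needed; both routes yield $N$ of order $d/p$. One correction: the asserted \emph{equivalence} $\nm{\op^w(b)}{\mascI_p}\asymp\nm{V_\Phi b}{L^p(W\times W)}$ is false for $p<2$ --- the inclusion $M^p\subseteq s_p^w$ is strict, and only the inequality $\nm{\op^w(b)}{\mascI_p}\lesssim\nm{V_\Phi b}{L^p}$ holds. Since your Gabor argument produces exactly that one-sided bound, and that is the only direction you use, the slip does not affect the proof; just do not state it as an equivalence.
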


\par

\begin{proof}
The symbol $b(X) = (1+|X|^2)^{-N}$ belongs to
$s_p^w(W)$, provided that $N\ge 1$ is chosen large 
enough (see e.{\,}g. \cite[Theorem 2.6]{Tof3}).
It follows that $\fy =\mascF _\sigma b\in s_p^w(W)$,
since
$s_p^w(W)$ is invariant under the symplectic Fourier
transform. This gives
\begin{multline*}
\nm a{s_p^w}
=
\nm {(1-\Delta )^{-N}((1-\Delta )^Na)}{s_p^w}
\\[1ex]
\asymp
\nm {\fy *((1-\Delta )^Na)}{s_p^w}
\lesssim
\nm \fy{s_p^w} \nm {(1-\Delta )^Na}{\WL ^{1,p}}.
\end{multline*}
Here the inequality follows from
\cite[Proposition 5.11]{BimTof}. This gives
the result.
\end{proof}

\par

\begin{proof}[Proof of Theorem
\ref{Thm:SymbClassEmbd}]
By $g\le g^{{}_0}$, Corollary \ref{Cor:WLgEmb},
and the fact that $S(m,g)$ increases with $g$,
it suffices to prove the result with
$g^{{}_0}$ in place of $g$. Hence
we may assume that $g$ is symplectic.

\par

Let $U_j$ and $\fy _k$ be
the same as in the proof of Proposition
\ref{Prop:WLNormIndep}, with $V_k=U_k$.
Also, let $g_j =g_{X_j}$ and
$U_{j,k}=U_j\cap U_k$. By Lemma
\ref{Lemma:SchattenWienerSobEst} and the fact that
$s_p^w(W)$ are invariant under symplectic
transformations we obtain
\begin{align*}
\nm {\fy _ja}{s_p^w}
\le
C\nm {(1-\Delta _{g_j})^N(\fy _ja)}
{\WL ^{1,p}_{g_j}}
% \asymp
% \nm {(1-\Delta _{g_j})^N(\fy _ja)}
% {\WL ^{1,p}_{g_j}}.
\end{align*}
Hence \eqref{Eq:DerWL1pEst2},
$\supp \fy _j\subseteq U_j$, and the fact
that $p\le 1$ give
\begin{align*}
\nm a{s_p^w}^p
&=
\NM {\sum _{j=1}^\infty (\fy _ja)}{s_p^w}^p
\le
\sum _{j=1}^\infty
\nm {\fy _ja}{s_p^w}^p
\\[1ex]
&\lesssim
\sum _{j=1}^\infty
\nm {(1-\Delta _{g_j})^N(\fy _ja)}
{\WL ^{1,p}_{g_j}}^p
\\[1ex]
&\asymp
\sum _{j=1}^\infty \sum _{k=1}^\infty
\left (
\int _{U_{j,k}}|(1-\Delta _{g_j})^N(\fy _j(X)a(X))|\, dX
\right )^p
\\[1ex]
&\lesssim
\sum _{j=1}^\infty \sum _{k=1}^\infty
\sum _{|\alpha |\le 2N}
\left (
\int _{U_{j,k}}|(\partial _{g_j}^\alpha a)(X)|\, dX
\right )^p
\\[1ex]
&\lesssim
\nmm a ^p
\sum _{j=1}^\infty \sum _{k=1}^\infty
\left (
\int _{U_{j,k}}|m(X)|\, dX
\right )^p
\end{align*}
Here $\nmm a$ denotes a semi-norm of $a$ in $S(m,g)$.
Since there is a bound of overlapping $U_j$, it follows
from these estimates that
$$
\nm a{s_p^w}^p
\lesssim
\nmm a ^p
\sum _{j=1}^\infty
\left (
\int _{U_j}|m(X)|\, dX
\right )^p \asymp \nmm a ^p \nm m{\WL _{g}^{1,p}}^p,
$$
which gives the result.
\end{proof}

\par

The next result improves Theorem 
\ref{Thm:SymbClassEmbd}. It also extends
\cite[Theorem 3.9]{Ho2}.

\par

\begin{thm}\label{Thm:MainResult2}
Let $p\in (0,1]$, $g$ be feasible on $W$,
$m$ be a positive function on $W$
such that $h_g^{k/2}m\in \WL _g^{1,p}(W)$
for some $k\ge 0$,
and suppose $a\in S(m,g)\cap \WL _g^{1,p}(W)$. Then
$a\in s^w_{p}(W)$.
\end{thm}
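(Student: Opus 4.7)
The plan is to refine the proof of Theorem \ref{Thm:SymbClassEmbd} by decomposing the Leibniz expansion of $(1-\Delta_{g_j})^N(\varphi_j a)$ into terms where no derivative acts on $a$ (bounded pointwise by $|a|$) and terms where at least one derivative falls on $a$ (bounded pointwise by $h_g^{|\beta|/2}m$ after a careful symbol-class analysis). Since $h_g\le 1$, the condition $h_g^{k/2}m\in\WL_g^{1,p}$ is preserved when $k$ is increased, so I may assume from the outset that $k\ge 2N$, where $N$ is the integer provided by Lemma \ref{Lemma:SchattenWienerSobEst}.

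Following the proof of Theorem \ref{Thm:SymbClassEmbd}, I fix an admissible $g$-covering $\{U_j\}$ and a subordinate partition of unity $\{\varphi_j\}\subseteq S(1,g)$, write $a=\sum_j\varphi_j a$, and use the $p$-triangle inequality together with symplectic invariance of $s_p^w$ and Lemma \ref{Lemma:SchattenWienerSobEst} to obtain
$$
\|a\|_{s_p^w}^p \le \sum_{j=1}^\infty \|(1-\Delta_{g_j})^N(\varphi_j a)\|_{\WL_{g_j}^{1,p}}^p.
$$
Expanding $(1-\Delta_{g_j})^N$ and applying Leibniz yields a finite sum of terms $(\partial^\alpha\varphi_j)(\partial^\beta a)$ with $|\alpha|+|\beta|\le 2N$. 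For terms with $|\beta|=0$, the uniform bound $|\partial^\alpha\varphi_j|\lesssim 1$ (from $\varphi_j\in S(1,g)$) gives the pointwise estimate $|(\partial^\alpha\varphi_j)a|\lesssim |a|\,\mathbf{1}_{U_j}$, so their total contribution is dominated by $\|a\|_{\WL_g^{1,p}}^p$, finite by hypothesis. For terms with $|\beta|\ge 1$, I invoke the refined estimate $|\partial_z^\beta a(X)|\lesssim \|a\|_{S(m,g)}\,h_g(X)^{|\beta|/2}\,m(X)$ in symplectic coordinates diagonalizing $g_{X_j}$: this follows from the defining bound $|a^{(|\beta|)}(X;y_1,\dots,y_{|\beta|})|\lesssim m(X)$ for $g_X(y_i)\le 1$, applied to coordinate directions $e_i$ whose $g_X$-length is at most $h_g(X)^{1/2}$. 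Combined with the slow variation of $h_g$ on $U_j$ and the arrangement $|\beta|\ge k$, the resulting contribution is bounded by $\|a\|_{S(m,g)}^p\cdot\|h_g^{k/2}m\|_{\WL_g^{1,p}}^p$, also finite by hypothesis.

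The principal technical obstacle is that $\Delta_{g_j}$ does \emph{not} on its own yield an $h_g$-gain: its coefficients $\lambda_i(X_j)^{-1}$ precisely cancel the eigenvalue factors $\lambda_i(X)\asymp\lambda_i(X_j)$ generated by the corresponding raw derivatives, so that the naive estimate gives only $|\Delta_{g_j}^n a(X)|\lesssim m(X)$ with no improvement. The $h_g^{k/2}$-gain must therefore be extracted by re-expanding $\Delta_{g_j}^n$ into products of raw coordinate derivatives $\partial_{z_i},\partial_{\zeta_i}$ and regrouping the $\lambda_i$-factors by slow variation, so that each Leibniz term with $|\beta|\ge 1$ carries an effective factor $h_g^{|\beta|/2}$ on $m$. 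The most delicate bookkeeping concerns terms with $1\le|\beta|<k$, where a direct bound by $h_g^{k/2}m$ is not available; these must be handled either by allocating additional derivatives to $a$ through a careful arrangement of $\alpha,\beta$ so as to reach $|\beta|\ge k$, or by further splitting and invoking the local $L^1$-control on $a$ itself.
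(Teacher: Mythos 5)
Your plan founders on precisely the obstacle you identify in your final paragraph, and neither of the fixes you sketch overcomes it. In the Leibniz expansion of $(1-\Delta _{g_j})^N(\fy _ja)$ every term carries the coefficient $\prod _l\lambda _{i_l}(X_j)^{-1}$ inherited from $\Delta _{g_j}$, while for $a\in S(m,g)$ the raw derivatives obey the sharp bound $|\partial ^\beta a(X)|\lesssim \lambda (X)^{\beta /2}m(X)$, where $\lambda (X)^{\beta /2}$ denotes the product of $\lambda _i(X)^{1/2}$ over the differentiated directions. Multiplying and using slow variation gives exactly $\prod _l\bigl (\lambda _{i_l}(X)/\lambda _{i_l}(X_j)\bigr )\, m(X)\asymp m(X)$ for \emph{every} term with $|\beta |\ge 1$ --- not only for $1\le |\beta |<k$ --- with no power of $h_g$ left over. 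Replacing $\lambda _i(X)\le \lambda _1(X)=h_g(X)$ to write $|\partial ^\beta a|\lesssim h_g^{|\beta |/2}m$ does not help, because the accompanying coefficient is then at least $h_g(X_j)^{-|\beta |/2}$ in the worst case; so ``regrouping the $\lambda _i$-factors by slow variation'' provably cannot produce an effective factor $h_g^{|\beta |/2}$. Likewise, Leibniz does not permit ``allocating additional derivatives to $a$'': the intermediate terms occur and must be estimated. The upshot is that your two hypotheses $a\in \WL _g^{1,p}$ and $h_g^{k/2}m\in \WL _g^{1,p}$ are never actually brought to bear on the terms with $1\le |\beta |\le 2N$, which is where the whole content of the theorem lies.

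The paper's proof is a reduction rather than a direct expansion: it passes to the larger metric $G=h_g^{-\alpha}g$, so that each $G$-normalized derivative genuinely gains a factor $h_g^{\alpha /2}$, sets $m_0=\sum _{n=0}^{N-1}|a|_n^G+h_g^{\alpha N/2}m$, and invokes \cite[Lemma 6.1]{Tof6} to conclude $a\in S(m_0,G)$. The intermediate derivatives, which cannot be bounded pointwise by either $|a|$ or $h_g^{k/2}m$, are instead controlled in $\WL ^{1,p}$-\emph{norm} by the Landau--Kolmogorov type inequality of Lemma \ref{Lemma:EstIntermedDer}; this is the content of Lemma \ref{Lemma:m0uppsk}, which yields $\nm {m_0}{\WL ^{1,p}_g}\lesssim \nm a{\WL ^{1,p}_g}+\nm {h_g^{\alpha N/2}m}{\WL ^{1,p}_g}$. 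Theorem \ref{Thm:SymbClassEmbd} applied to $S(m_0,G)$, together with Proposition \ref{Prop:WLgEmb} to pass from $\WL ^{1,p}_g$ to $\WL ^{1,p}_G$, then finishes the proof. Any repair of your direct approach would have to replace $(1-\Delta _{g_j})^N$ by $(1-\Delta _{G_j})^N$ and use the norm-level interpolation for the intermediate terms --- at which point you will have reconstructed the paper's argument.
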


\par

For the proof we need the following lemmas.

\par

% \begin{lemma}\label{Lemma:m0uppsk}
% Let $g$ be a Riemannian metric on $W$,
% $\alpha \in [0,1]$ and set $G=h_g^{-\alpha}g$.
% Also assume that $N\ge 0$ is an integer which
% is fixed, $m>0$ is a weight function on
% $W$, $a\in \mathscr{C}^N(W)$, and set
% $$
% m_0=\sum _{k=0}^{N-1}|a|_k^G + h_g^{\alpha N/2}m.
% $$
% Then the following are true:
% \begin{enumerate}
% \item if $g$ is slowly varying and $a\in  S_N(m,g)$, then $a\in
% S_N(m_0,G)$. Furthermore, if $p\in (0,1]$, then
% %%
% \begin{equation}\label{m0}
% \nm {m_0}{\WL ^{1,p}_g}\le C(\nm a{\WL ^{1,p}_g}
% +\nm {h_g^{\alpha N/2}m}{\WL ^{1,p}_g}).
% \end{equation}
% %%
% In particular, if $a\in \WL^{1,p}_g(W)$
% and $h_g^{\alpha N/2}m\in \WL ^{1,p}_g(W)$,
% then $m_0\in \WL ^{1,p}_g(W)$;

% \vrum

% \item if $g$ is feasible and $a\in S(m,g)$,
% then $a\in S(m_0,G)$.
% \end{enumerate}
% \end{lemma}

\par

\begin{lemma}\label{Lemma:EstIntermedDer}
Let $p\in (0,\infty ]$, $q\in [1,\infty ]$,
$N \in \mathbf N$ and
$f \in \WL ^{q,p}(\rr d) \cap
\mathscr C^N(\rr d)$. Then there
exists a constant $C>0$ such that
\begin{equation}\label{Eq:DerWL1pEst}
\nm
{\partial^\alpha f}{\WL ^{q,p}}^p
\leq C
\left(
\nm f {\WL ^{q,p}}^p
+
\sum_{|\beta|=N}
\nm {\partial^\beta f}{\WL ^{q,p}}^p 
\right).
\end{equation}
\end{lemma}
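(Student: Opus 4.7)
The strategy is to localize the inequality using the admissible cover $\{j+U\}_{j\in\zz d}$ defining the $\WL^{q,p}$-norm, thereby reducing it to a single classical interpolation inequality for intermediate derivatives on the fixed bounded domain $U$.

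The natural restriction on $\alpha$ is $|\alpha|\le N$; the cases $|\alpha|=0$ and $|\alpha|=N$ are trivial, so it suffices to treat $0<|\alpha|<N$. The key local input is the standard equivalence of Sobolev norms on a bounded Lipschitz domain (a consequence of Gagliardo-Nirenberg interpolation of intermediate derivatives): there exists $C_U>0$, depending only on $U$, $q$ and $N$, such that for every $g\in\mascC^N(\overline U)$,
$$
\nm{\partial^\alpha g}{L^q(U)}\le C_U\left(\nm g{L^q(U)}+\sum_{|\beta|=N}\nm{\partial^\beta g}{L^q(U)}\right).
$$
Applying this to the translates $g(\cdot)=f(\cdot+j)$ and invoking translation invariance of Lebesgue measure yields the analogous estimate with $U$ replaced by $j+U$, with the same constant $C_U$ for every $j\in\zz d$.

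Next I would raise to the $p$-th power using the elementary inequality $(a+b)^p\le c_p(a^p+b^p)$ (with $c_p=1$ when $0<p\le 1$ and $c_p=2^{p-1}$ when $p\ge 1$), producing, for each $j$,
$$
\nm{\partial^\alpha f}{L^q(j+U)}^p\le c_p C_U^p\left(\nm f{L^q(j+U)}^p+\sum_{|\beta|=N}\nm{\partial^\beta f}{L^q(j+U)}^p\right),
$$
and then summing over $j\in\zz d$ (or taking the supremum when $p=\infty$) yields exactly \eqref{Eq:DerWL1pEst}. I do not anticipate a genuine obstacle: the whole argument is essentially an $\ell^p$-assembly of one classical local Sobolev interpolation inequality, with uniformity of the constant across $j$ coming for free from translation invariance of the admissible covering. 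The only mild care needed concerns the $p=\infty$ case (handled by replacing sums with suprema throughout) and the observation that, since $f\in\mascC^N(\rr d)$ and $U$ is bounded, the restriction of $f$ to any translate $\overline{j+U}$ lies in $\mascC^N(\overline{j+U})$, so the local inequality applies without any additional regularity assumption.
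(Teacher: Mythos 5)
Your proposal is correct and follows essentially the same route as the paper: a local intermediate-derivative estimate on each translate $j+U$ (the paper cites Bennett--Sharpley for it, with the constant uniform in $j$ by translation invariance), followed by raising to the $p$-th power and summing over $j\in\zz d$. Your extra remarks on the $p=\infty$ case and on the regularity of $f$ on $\overline{j+U}$ are fine but not points the paper dwells on.
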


\par

% \begin{rem}
% Let $p\in (0,1]$, $\theta \in \mathbf R$,
% $g$ be a feasible metric on $W$,
% and let
% $\maclU =\{ U_j\} _{j=0}^\infty$
% be an admissible $g$-covering of $W$ with centers
% $X_j\in W$, $j\in \mathbf N$. Also let
% $\WL _{g,\theta}^{1,p}(W)$
% be the set of all measurable functions
% $a$ on $W$ such that
% $\nm a{\WL _{g,\theta}^{1,p}}$ is finite,
% where
% %%
% \begin{equation}\label{Eq:WLgNorm}
% \nm a{\WL _{g,\theta }^{1,p}}
% \equiv
% \left (
% \sum _{j=0}^\infty
% \left (
% \int _{U_j}|a(X)|\, dX
% \right )^p
% |U_j|^{\theta}
% \right )^{\frac 1p}
% \end{equation}
% %%
% We have excluded $\maclU$ in the notation
% of $\WL _{g,\theta}^{1,p}$ because in Proposition
% \ref{Prop:WLNormIndep} below we prove that
% $\WL _{g,\theta}^{1,p}(W)$
% is independent of the choice of admissible
% $g$-coverings.
% $$
% \nm a{\WL _g^{1,p}}
% \equiv
% \left (
% \sum _{j=0}^\infty
% \left (
% \int _{U_j}|a(X)|\, dX
% \right )^p
% |U_j|^{1-p}
% \right )^{\frac 1p}
% $$
% $$
% \nm a{\WL _g^{1,p}}
% \equiv
% \left (
% \sum _{j=0}^\infty
% \left (
% \int _{U_j}|a(X)|\, dX
% \right )^p
% \right )^{\frac 1p}
% $$

\par

\begin{lemma}\label{Lemma:m0uppsk}
Let $g$ be a feasible metric on $W$,
$\alpha \in [0,1]$ and set $G=h_g^{-\alpha}g$.
Also, assume that $N\ge 0$ is an integer which
is fixed, $m>0$ is a weight function on
$W$, $a\in \mathscr{C}^N(W)$, and set
$$
m_0=\sum _{n=0}^{N-1}|a|_n^G + h_g^{\alpha N/2}m.
$$
Then the following are true:
\begin{enumerate}
\item if $p\in (0,1]$, then
\begin{equation}\label{m0}
\nm {m_0}{\WL ^{1,p}_g}\le C(\nm a{\WL ^{1,p}_g}
+\nm {h_g^{\alpha N/2}m}{\WL ^{1,p}_g})\text ;
\end{equation}

\vrum

\item if $a\in \WL^{1,p}_g(W)$
and $h_g^{\alpha N/2}m\in \WL ^{1,p}_g(W)$,
then $m_0\in \WL ^{1,p}_g(W)$.
\end{enumerate}
\end{lemma}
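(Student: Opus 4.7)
My plan is to decompose $m_0$ into its $N+1$ summands, apply the $p$-quasi-triangle inequality (valid since $p\in(0,1]$), and bound each $\||a|_n^G\|_{\WL^{1,p}_g}$ for $0\le n\le N-1$. The case $n=0$ gives $|a|_0^G=|a|$, which is already on the right-hand side of \eqref{m0}; the summand $h_g^{\alpha N/2}m$ of $m_0$ is also already there; so the actual work lies in the intermediate derivatives $0<n<N$. Part~(ii) is then immediate from part~(i), since its hypotheses make the right-hand side of \eqref{m0} finite.

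Two ingredients drive the argument. The first is the pointwise identity $|a|_n^G(X)=h_g^{\alpha n/2}(X)\,|a|_n^g(X)$, obtained from $G=h_g^{-\alpha}g$ by rescaling $y\mapsto h_g^{\alpha/2}(X)y$ in \eqref{e1.1}. Since $g$ is feasible, $g$ and $g^\sigma$ (hence $h_g$) are slowly varying, so $h_g$ is essentially constant on each ball $U_j=U_{X_j,R}$ of an admissible $g$-covering. The second ingredient is a local interpolation: on each $U_j$ I diagonalize $g_{X_j}$ via a linear map $T_j$, after which $T_j(U_j)$ is comparable to a fixed Euclidean ball $B\subset\rr{2d}$ and $|a|_n^g\asymp \sum_{|\beta|=n}|\partial^\beta\tilde a|$ with $\tilde a=a\circ T_j^{-1}$. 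A standard Gagliardo--Nirenberg inequality on $B$, combined with the elementary estimate $(x+y)^{n/N}\le x^{n/N}+y^{n/N}$ for $x,y\ge 0$, gives
\begin{equation*}
\int_{U_j}|a|_n^g \lesssim \int_{U_j}|a| + \Bigl(\int_{U_j}|a|\Bigr)^{1-n/N}\Bigl(\int_{U_j}|a|_N^g\Bigr)^{n/N}.
\end{equation*}
Invoking $|a|_N^g\lesssim m$ (from $a\in S(m,g)$, the constant depending on the $N$-th $S(m,g)$-seminorm of $a$) and using $h_g^{\alpha n/2}(X_j)=\bigl(h_g^{\alpha N/2}(X_j)\bigr)^{n/N}$ to distribute the weight yields
\begin{equation*}
\int_{U_j}|a|_n^G \lesssim \int_{U_j}|a| + \Bigl(\int_{U_j}|a|\Bigr)^{1-n/N}\Bigl(\int_{U_j}h_g^{\alpha N/2}m\Bigr)^{n/N}.
\end{equation*}

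To finish, I raise to the $p$-th power, apply Young's inequality $X^{1-\theta}Y^\theta\le (1-\theta)X+\theta Y$ with $\theta=n/N$ to split the product into a sum, and sum over $j$; this produces $\||a|_n^G\|_{\WL^{1,p}_g}^p\lesssim \|a\|_{\WL^{1,p}_g}^p+\|h_g^{\alpha N/2}m\|_{\WL^{1,p}_g}^p$, and the $p$-quasi-triangle inequality then delivers \eqref{m0}. The main obstacle is uniformity of the Gagliardo--Nirenberg constant across all the balls $U_j$; this is handled by the reduction through $T_j$ to a single Euclidean ball $B$, after which $g_{X_j}$ becomes the standard Euclidean metric on $B$ and slow variation of $g$ controls the remaining distortion inside $U_j$ by a constant independent of $j$.
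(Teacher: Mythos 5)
Your argument is correct and follows essentially the same route as the paper: localize to the balls of an admissible $g$-covering, rescale so that $g_{X_j}$ becomes Euclidean, use the identity $|a|_n^G=h_g^{\alpha n/2}|a|_n^g$ together with $|a|_N^g\lesssim m$, and interpolate the intermediate derivatives between $a$ and $h_g^{\alpha N/2}m$; your multiplicative Gagliardo--Nirenberg step followed by Young's inequality is just a repackaging of the additive interpolation inequality that the paper invokes as Lemma \ref{Lemma:EstIntermedDer}. Note that both your proof and the paper's silently use $|a|_N^g\le Cm$ (i.e.\ that $a\in S_N(m,g)$ with controlled seminorm), which is not among the stated hypotheses of the lemma but holds in its intended application; you at least make this dependence explicit.
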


\par

\begin{proof}
[Proof of Lemma \ref{Lemma:EstIntermedDer}]
Let $U$ be as in Definition \ref{Def:WienerLebDef}.
Then there
exists a constant $C>0$ such that, for
any $|\alpha| \leq N$ and $j\in \zz d$,
$$
\nm {\partial^\alpha f}{L^q(j+U)}
\leq
C\left(
\nm f {L^q(j+U)} + \sum_{|\beta|=N}
\nm {\partial^\beta f}{L^q(j+U)}
\right).
$$
(See e.{\,}g. \cite{BenSha}.) Hence
% , given a lattice $\mathcal L$, for any $j \in 
% \mathcal L$ and $Q$ being the unitary cube, we 
% have that
% $$
% \| \partial^\alpha f \|_{L^1(j+Q)} \leq C
% \left( \| 
% f \|_{L^1(j+Q)} + \sum_{|\beta|=N} \| 
% \partial^\beta f \|_{L^1(j+Q)} \right)
% $$
% and,
for a (possibly new) constant $C>0$, we obtain 
% that for $0<p \leq 1$
$$
\nm {\partial^\alpha f}{L^q(j+U)}^p
\leq
C \left(
\| f \|_{L^q(j+U)}^p
+
\sum_{|\beta|=N}
\nm {\partial^\beta f}{L^q(j+U)}^p \right ).
$$
Summing up with respect to $j \in \zz d$ we have
$$
\begin{aligned}
\| \partial^\alpha f \|_{\WL ^{q,p}}^p
&=
\sum_{j \in \zz d} \| \partial^\alpha f 
\|_{L^q(j+U)}^p
\\[1ex]
&\leq C
\left(
\sum_{j \in \zz d} \| f \|_{L^q(j+U)}^p + 
\sum_{j \in \zz d} \sum_{|\beta|=N} \| 
\partial^\beta f \|_{L^q(j+U)}^p
\right )
\\[1ex]
&= C \left(
\| f \|_{\WL ^{q,p}}^p +  \sum_{|\beta|=N} \| 
\partial^\beta f \|_{\WL ^{q,p}}^p 
\right).
\quad
\qedhere
\end{aligned}
$$
\end{proof}

\par

\begin{proof}[Proof of Lemma \ref{Lemma:m0uppsk}]
It suffices to prove (i). By \cite[Lemma 6.1]{Tof6},
it follows that
$|a|^G_k\le Cm_0$ for some constant $C>0$.
Let $V_j=U_j$, and let $\fy _j$ and $U_j$ for 
$j\in\mathbf{Z}_+$
be as in the proof of Proposition 
\ref{Prop:WLNormIndep}.
Also, let $\{
\psi _j\} _{j=1}^\infty$ be a bounded sequence in 
$S(1,g)$ such that $\psi
_j\in C_0^\infty (U_j)$ and $\psi _j=1$
in the support of $\fy
_j$. Lastly, let $g_j=g_{X_j}$ and $G_j=G_{X_j}$.
Then
$$
|\fy _ja|^{G_j}_N=h_{g_j}^{\alpha N/2}|
\fy _ja|^{g_j}_N\le
Ch_{g_j}^{\alpha N/2}\psi _jm,
$$
where the constant $C$ is independent of 
$j\in\mathbf{Z}_+$.
For every $j\in \mathbf Z_+$, let $G_j$ define the 
Euclidean structure on
$W$. By Lemma \ref{Lemma:EstIntermedDer}, and the fact 
that $C$ in
\eqref{Eq:DerWL1pEst} is invariant under changes of symplectic
structures on $W$, it follows that
$$
\nm {\, |\fy _ja|^{G_j}_n}{L^1}
\le
C\big ( \nm {\fy _ja}{L^1}
+
\nm {h_{g_j}^{\alpha N/2}\psi _jm}{L^1}\big ),
$$
where the constant $C$ neither depends on
$j\in \mathbf Z_+$ nor on $n\in \{
0,\dots ,N\}$.

\par

We have
\begin{align*}
\nm {\, |a|^{G}_n}{\WL^{1,p}_g}^p
=
\nm {\, |
\sum _{l=1}^\infty \fy _la
|^{G}_n}{\WL ^{1,p}_g}^p
=
\sum _{j=1}^\infty \left ( \int _{U_j} 
|
\sum _{l=1}^\infty
\fy _l a |^{G}_n(X)\, dX
\right )^p|U_j|^{1-p}.
\end{align*}
Since there is a bound of overlapping sets $U_j$
when $j\in\mathbf{Z}_+$, we get
$$
\left ( \int _{U_j} 
|
\sum _{l=1}^\infty\fy _la
|^{G}_n(X)\, dX
\right )^p
\le
C_1  \left ( \sum _{k=0}^n \int _{U_j} 
|a|^{G}_k(X)\, dX
\right )^p,
$$
where the constant $C_1$ is independent of $j$. By
Lemma \ref{Lemma:EstIntermedDer} we obtain
\begin{align*}
\nm {\, |a|^{G}_n}{\WL^{1,p}_g}^p
&\le
C_1\sum _{j=1}^\infty \left ( \sum _{k=0}^n\int _{U_j} 
|a|^{G}_k(X)\, dX \right )^p|U_j|^{1-p}
\\[1ex]
&\le
C_2\sum _{j=1}^\infty\left ( \int _{U_j} 
\left (
|a(X)| + |a|^{G}_N(X)
\right )
\, dX \right )^p|U_j|^{1-p}
\\[1ex]
&\le
C_3\left (
\sum _{j=1}^\infty\left ( \int _{U_j} 
|a(X)|\, dX \right )^p|U_j|^{1-p}
\right .
\\
&{\phantom k} \qquad \qquad \qquad
\left .
+
\sum _{j=1}^\infty\left ( \int _{U_j} 
h_{g_j}^{\alpha N/2}(X)m(X)\, dX \right )^p|U_j|^{1-p}
\right )
\\[1ex]
&\asymp
\nm a{\WL^{1,p}_g}^p + \nm {h_gm}{\WL^{1,p}_g}^p\, ,
\end{align*}
for some constants $C_2$ and $C_3$.
This gives \eqref{m0}, and the proof is complete.
\end{proof}

\par

\par

\begin{rem}
By the proof of Lemma \ref{Lemma:EstIntermedDer},
it follows that the constant $C$ in
\eqref{Eq:DerWL1pEst} only depends on the dimension
of $W$ and on $N$.

\par

In particular, by changing the coordinates
in suitable ways, and using that there is
a bound of overlapping $U_j$, it follows that
\begin{equation}\label{Eq:DerWL1pEst2}
\Nm
{|a|_k^g}{\WL_{g,\theta} ^{q,p}}^p
\leq C
\left(
\nm a {\WL _{g,\theta}^{q,p}}^p
+
\Nm {|a|_N^g}{\WL _{g,\theta}^{q,p}}^p 
\right),
\quad k=0,1,\dots ,N.
\end{equation}
%%
% %%
% \begin{equation}\label{Eq:DerWL1pEst2}
% \nm
% {\partial _g^\alpha a}{\WL_g ^{1,p}}^p
% \leq C
% \left(
% \nm a {\WL _g^{1,p}}^p
% +
% \sum_{|\beta|=N}
% \nm {\partial _g^\beta f}{\WL _g^{1,p}}^p 
% \right).
% \end{equation}
% %%
\end{rem}

\par

\begin{proof}[Proof of Theorem \ref{Thm:MainResult2}]
Let $G$ and $m_0$ be as in Lemma \ref{Lemma:m0uppsk}.
We observe that if
$a\in S(m,g)$, then $a\in S(m_0,G)$, in view of
\cite[Lemma 6.1]{Tof6}. The result now follows from
Theorem \ref{Thm:SymbClassEmbd}.
\end{proof}

\par

If the involved
weight functions are $g$-continuous,
we can replace the conditions on them
as in the next two theorems, where
the first one agrees with
\cite[Theorem 4.1]{Tof14} when $p\le 1$.

\par

\begin{thm}\label{Thm:SymbClassEmbdB}
Let $p\in (0,1]$, $g$ be feasible on $W$, and
$m\in L^p(W)$ be a positive $g$-continuous 
function on $W$.
Then $S(m,g)\subseteq s_p^w(W)$.
\end{thm}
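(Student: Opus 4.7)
My plan is to reduce Theorem \ref{Thm:SymbClassEmbdB} to Theorem \ref{Thm:SymbClassEmbd} by showing that, for a $g$-continuous positive weight $m$ and $p\in(0,1]$, the conditions $m\in L^p(W)$ and $m\in \WL_g^{1,p}(W)$ are equivalent. This is the content of Lemma \ref{Lemma:WeightNormEquiv} referenced in the introduction; once it is in hand, Theorem \ref{Thm:SymbClassEmbd} immediately gives $S(m,g)\subseteq s_p^w(W)$.

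To establish the norm equivalence, I would fix an admissible $g$-covering $\{U_j\}_{j\in\mathbf{Z}_+}=\{U_{X_j,R}\}$ as in Lemma \ref{Lemma:AdmCover}. By $g$-continuity of $m$ there is a constant $C>0$ such that $C^{-1}m(X_j)\le m(X)\le Cm(X_j)$ for every $X\in U_j$. This yields
\begin{equation*}
\int_{U_j}m(X)\,dX \asymp m(X_j)\,|U_j|,
\qquad
\int_{U_j}m(X)^p\,dX \asymp m(X_j)^p\,|U_j|,
\end{equation*}
uniformly in $j$. Taking $\theta=1-p$ (so that the relevant space is $\WL_g^{1,p}$) gives
\begin{equation*}
\nm m{\WL_g^{1,p}}^p
=\sum_{j=1}^\infty \Big(\int_{U_j}m\,dX\Big)^p|U_j|^{1-p}
\asymp \sum_{j=1}^\infty m(X_j)^p\,|U_j|^p\cdot|U_j|^{1-p}
\asymp \sum_{j=1}^\infty \int_{U_j} m(X)^p\,dX.
\end{equation*}
Since $\{U_j\}$ covers $W$ with a uniform bound on overlaps (Lemma \ref{Lemma:AdmCover}(iii)), the last sum is comparable to $\nm m{L^p}^p$, giving $\nm m{L^p}\asymp \nm m{\WL_g^{1,p}}$.

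Having established the equivalence, the hypothesis $m\in L^p(W)$ yields $m\in \WL_g^{1,p}(W)$, and Theorem \ref{Thm:SymbClassEmbd} delivers $S(m,g)\subseteq s_p^w(W)$. The only nontrivial step is the norm equivalence, and even that is routine given $g$-continuity and the uniform overlap properties of admissible coverings; no further analytic machinery is required.
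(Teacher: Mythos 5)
Your proposal follows exactly the paper's route: the paper derives Theorem \ref{Thm:SymbClassEmbdB} from Theorem \ref{Thm:SymbClassEmbd} via Lemma \ref{Lemma:WeightNormEquiv}, whose proof is precisely your norm-equivalence argument ($\int_{U_j}m\,dX\asymp m(X_j)|U_j|$ by $g$-continuity, plus bounded overlap of the covering). The only blemish is the parenthetical ``$\theta=1-p$'': the convention $\theta=\frac1p-\frac1q$ gives $\theta=\frac1p-1$ for $q=1$, but your displayed exponent $|U_j|^{\theta p}=|U_j|^{1-p}$ is the correct one, so the argument stands.
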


\par

\begin{thm}\label{Thm:MainResult2B}
Let $p\in (0,1]$, $g$ be feasible on $W$,
$m$ be a positive $g$-continuous
function on $W$ such that
$h_g^{k/2}m\in L^p(W)$ for some $k\ge 0$,
and suppose $a\in S(m,g)\cap \WL _g^{1,p}(W)$. 
Then $a\in s_p^w(W)$.
\end{thm}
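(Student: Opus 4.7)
The plan is to reduce Theorem \ref{Thm:MainResult2B} directly to Theorem \ref{Thm:MainResult2}. The only difference between the two statements is that in \ref{Thm:MainResult2B} the weight condition on $h_g^{k/2}m$ is stated in $L^p$ rather than in $\WL^{1,p}_g$, but an extra $g$-continuity hypothesis is imposed on $m$. Hence it suffices to show that, under these hypotheses, $h_g^{k/2}m \in \WL^{1,p}_g(W)$.

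First, I would check that $h_g$ itself is $g$-continuous whenever $g$ is slowly varying. This is a direct consequence of \eqref{Eq:Slowly}: if $g_X(Y-X)\le c$, then $g_X\asymp g_Y$, and by the definition of $g^\sigma$ this forces $g^\sigma_X\asymp g^\sigma_Y$ as well, so the ratio $\sup_Z g_X(Z)/g_X^\sigma(Z)$, which equals $h_g(X)^2$, is comparable to the analogous expression at $Y$. Since a product of positive $g$-continuous functions is $g$-continuous, the combined weight $h_g^{k/2}m$ is $g$-continuous.

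Next, I would invoke Lemma \ref{Lemma:WeightNormEquiv} (mentioned after \eqref{Eq:IntrSchattResult2R} in the introduction), which asserts that for a $g$-continuous positive function the conditions of belonging to $L^p$ and to $\WL^{1,p}_g$ are equivalent. Applying this lemma to $h_g^{k/2}m$ and to the hypothesis $h_g^{k/2}m\in L^p(W)$, we obtain $h_g^{k/2}m\in \WL^{1,p}_g(W)$. All hypotheses of Theorem \ref{Thm:MainResult2} (with the same $k$) are now in force: $g$ is feasible, $m$ is positive, $h_g^{k/2}m\in \WL^{1,p}_g(W)$, and $a\in S(m,g)\cap \WL^{1,p}_g(W)$. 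Therefore $a\in s_p^w(W)$, as desired.

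No genuine obstacle arises in this reduction; the only mildly technical point is the $g$-continuity of $h_g$, which is a standard consequence of slow variation and could also simply be cited from the Weyl--H\"ormander literature (cf.\ \cite{Ho3}, Section 18.5--18.6). Note that the same strategy yields Theorem \ref{Thm:SymbClassEmbdB} from Theorem \ref{Thm:SymbClassEmbd}: the $g$-continuity of $m$ together with Lemma \ref{Lemma:WeightNormEquiv} promotes $m\in L^p(W)$ to $m\in \WL^{1,p}_g(W)$, after which Theorem \ref{Thm:SymbClassEmbd} immediately applies.
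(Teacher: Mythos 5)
Your proposal is correct and follows exactly the route the paper intends: the paper states that Theorem \ref{Thm:MainResult2B} is a straightforward consequence of Theorem \ref{Thm:MainResult2} combined with Lemma \ref{Lemma:WeightNormEquiv}, leaving the details (notably the $g$-continuity of $h_g^{k/2}m$, which you verify correctly) to the reader. Nothing to add.
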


\par

Theorems \ref{Thm:SymbClassEmbdB}
and \ref{Thm:MainResult2B} are
straight-forward consequences
of Theorems \ref{Thm:SymbClassEmbd}
and \ref{Thm:MainResult2},
combined with the following lemma.
% \ref{Lemma:WeightNormEquiv}.
The details are left for the reader.

\par

\begin{lemma}\label{Lemma:WeightNormEquiv}
Let $p,q\in (0,\infty ]$, $g$ be slowly varying,
and $m$ be $g$-continuous on $W$. Then
$$
m\in L^p(W)
\quad \iff \quad
m\in \WL _{g}^{q,p}(W).
% m\in \WL _{g,\theta}^{q,p}(W),
% \qquad
% \theta =\frac 1p-\frac 1q.
$$
\end{lemma}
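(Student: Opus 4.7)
The plan is to exploit the fact that $g$-continuity of $m$ makes $m$ essentially constant on each ball of an admissible $g$-covering, so that both norms reduce to comparable discrete sums. I will work with a fixed admissible $g$-covering $\{U_j\}_{j\in\mathbf Z_+}$ with centres $X_j$ (cf. Lemma \ref{Lemma:AdmCover}), and recall that by Proposition \ref{Prop:WLNormIndep} the choice of covering is immaterial.

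First I would treat the generic case $p,q\in (0,\infty)$. By $g$-continuity of $m$, there is a constant $C>0$ such that $C^{-1}m(X_j)\le m(X)\le C\, m(X_j)$ for every $X\in U_j$, uniformly in $j$. This immediately yields
\begin{equation*}
\nm{m}{L^q(U_j)}\asymp m(X_j)\,|U_j|^{1/q},\qquad
\int_{U_j}m(X)^p\,dX \asymp m(X_j)^p\,|U_j|,
\end{equation*}
with constants independent of $j$. Substituting the first relation into the definition of $\nm{m}{\WL_g^{q,p}}$ and using $\theta=\tfrac1p-\tfrac1q$ gives
\begin{equation*}
\nm{m}{\WL_g^{q,p}}^p
\asymp \sum_{j=1}^\infty m(X_j)^p|U_j|^{(1/q+\theta)p}
=\sum_{j=1}^\infty m(X_j)^p|U_j|.
\end{equation*}
On the other hand, by the covering property (ii) and the finite-overlap property (iii) of Lemma \ref{Lemma:AdmCover}, together with the second pointwise estimate above,
\begin{equation*}
\nm{m}{L^p(W)}^p \asymp \sum_{j=1}^\infty\int_{U_j}m(X)^p\,dX
\asymp \sum_{j=1}^\infty m(X_j)^p|U_j|.
\end{equation*}
Comparing these two chains of equivalences yields the desired equivalence $\nm{m}{L^p(W)}\asymp \nm{m}{\WL_g^{q,p}}$.

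For the boundary cases I would argue analogously. When $q=\infty$, $\nm{m}{L^\infty(U_j)}\asymp m(X_j)$ by $g$-continuity, and the same computation with $\theta=1/p$ goes through. When $p=\infty$, the $\ell^p$-norm is a supremum and one obtains $\nm{m}{\WL_g^{q,\infty}}\asymp \sup_j m(X_j)\asymp \nm{m}{L^\infty}$, again using $g$-continuity and the covering property.

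I do not expect a serious obstacle here; the only point that requires a little care is bookkeeping of the exponent $\theta=\tfrac1p-\tfrac1q$, which must cancel precisely with the factor $|U_j|^{1/q}$ coming from the $L^q$-norm of the essentially constant function $m$ on $U_j$ in order to produce $|U_j|^{1/p}$, whose $p$-th power matches the volume factor arising from the direct integral computation of $\nm{m}{L^p}^p$. Once this cancellation is observed, the equivalence is an immediate consequence of $g$-continuity and the two properties of admissible coverings.
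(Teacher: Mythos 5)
Your proposal is correct and follows essentially the same route as the paper's own proof: use $g$-continuity to replace $m$ by $m(X_j)$ on each ball of an admissible covering, reduce both quasi-norms to the discrete sum $\sum_j m(X_j)^p|U_j|$, and observe the cancellation $|U_j|^{p/q}|U_j|^{\theta p}=|U_j|$ for $\theta=\tfrac1p-\tfrac1q$. Your explicit treatment of the endpoint cases $p=\infty$ and $q=\infty$ is a small addition the paper leaves implicit.
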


\par

\begin{proof}
%[Proof of Lemma
% \ref{Lemma:WeightNormEquiv}]
% Since $\WL _g^{1,p}(W)\subseteq L^p(W)$,
% we get
% $$
% m\in \WL _g^{1,p}(W)
% \quad \Rightarrow \quad
% m\in L^p(W).
% $$
%
% \par
%
% On the other hand,
Suppose $m\in L^p(W)$, and
let $\{U_j\}_{j\in\mathbf{Z}_+}$ be an admissible $g$-covering of $W$ with
centers in $X_j\in W$, $j\in \mathbf Z_+$.
Since $m$ is $g$-continuous and $g$ is slowly varying, it follows that
$$
\nm m{L^p}^p \asymp
\sum _{j=1}^\infty m(X_j)^p|U_j| .
$$
By using the $g$-continuity again, it also
follows that
$$
\nm m{\WL _{g,\theta}^{q,p}}^p \asymp
\sum _{j=1}^\infty m(X_j)^p|U_j|^{\frac pq}
|U_j|^{\theta p}=
\sum _{j=1}^\infty m(X_j)^p|U_j|,
$$
and the asserted equivalence follows from
these relations.
\end{proof}

\par

\begin{rem}
Suppose that, in addition to the assumptions
of Theorem \ref{Thm:SymbClassEmbdB},
the metric $g$ and the weight
$m$ are $\sigma$-temperate
and $(\sigma ,g)$-temperate,
respectively. Then there is
a natural extension of
Theorem \ref{Thm:SymbClassEmbdB} to
Weyl operators acting on
Sobolev-type Hilbert spaces, $H(m,g)$,
introduced by Bony and Chemin in
\cite{BonChe}, which is especially
suitable for the Weyl-H{\"o}rmander
calculus.
(See also Section 2.6 in \cite{Ler}.)

\par

In fact, suppose that $m$ and $m_0$ are
$g$-continuous and $(\sigma ,g)$-temperate,
and $a\in S(m,g)$. Then 
$$
\op ^w(a):H(m_0,g)\to H(m_0/m,g)
$$
is continuous. In \cite{BonChe,Ler} it is
also shown that there are
$a_0\in S(m,g)$ and $b_0\in S(1/m,g)$ such that
\begin{equation}\label{Eq:WeylHormInverses}
\op ^w(b_0)= \op ^w(a_0)^{-1},
\qquad
a_0\in S(m,g),\ b_0\in S(1/m,g).
\end{equation}
Especially, it follows that
$$
\op ^w(a_0) : H(m_0,g) \to H(m_0/m,g)
\quad \text{and}\quad
\op ^w(b_0) : H(m_0/m,g) \to H(m_0,g)
$$
are continuous bijections, which are inverses
to each other. In particular, from these mapping 
properties it follows that equality is attained
in \eqref{Eq:CompWeylHormClasses}.

\par

Now let $p\in (0,1]$, $g$ be strongly feasible
on $W$, and $m$, $m_1$, and $m_2$ be
positive $g$-continuous and $(\sigma ,g)$-temperate
functions on $W$ such that
$$
\frac {m_2m}{m_1}\in L^p(W).
$$
A combination of Theorem \ref{Thm:SymbClassEmbdB}
and \eqref{Eq:WeylHormInverses} then gives
$$
S(m,g)\subseteq s_{A,p}(\maclH _1,\maclH _2),
\quad \text{when} \quad
\maclH _1 = H(m_1,g),\ \maclH_2 =H(m_2,g).
$$
(See also \cite[Theorem 4.4]{Tof14}.)
Since $H(1,g)=L^2(V)$, in view of
\cite{BonChe,Ler}, we regain 
Theorem \ref{Thm:SymbClassEmbdB}
in the case when $m$ is
$g$-continuous and $(\sigma ,g)$-temperate,
by choosing $m_1=m_2=1$.
\end{rem}

\par

% \begin{thm}\label{Thm:MainResult2C}
% Let $p\in (0,1]$, $g$ be strongly feasible
% on $W$, and $m$, $m_1$, and $m_2$ be
% positive $g$-continuous and $(\sigma ,g)$-temperate
% functions on $W$ such that
% $$
% \frac {h_g^{k/2}m_2m}{m_1}\in L^p(W)
% $$
% and suppose
% $$
% \frac {h_g^{k/2}m_2a}{m_1}
% \in
% S(m,g)\cap \WL _g^{1,p}(W).
% $$
% Then $a\in 
% s_p^w(m_1,m_2,g)$.
% \end{thm}

\par

\subsection{Split metrics and more general
pseudo-differential calculi}\label{sec32}

\par

In order to state analogous results for
more general pseudo-differential calculi,
we need to impose further restrictions on
the metric $g$ and weight function $m$.

\par

We recall that a feasible metric $g$
on $W$ is called \emph{split}, if there are
global symplectic coordinates $Y=(y,\eta )$
such that
$$
g_X(y,-\eta ) = g_X(y,\eta ),
$$
for all $X\in W$.

\par

The next proposition follows from
\cite[Theorem 18.5.10]{Ho3} and its proof.
The details are left for the reader.

\par

\begin{prop}\label{Prop:CalculiInv}
Let $A,B\in \maclL(V)$, $g$ be strongly
feasible and split on $W=T^*V$, and let
$m$ be $g$-continuous and
$(\sigma ,g)$-temperate weight function.
Then
$$
\op _A(S(m,g)) = \op _B(S(m,g)).
$$
\end{prop}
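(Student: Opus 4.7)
The plan is to reduce the desired equality to the preservation of $S(m,g)$ under a family of intertwining operators, and then invoke \cite[Theorem 18.5.10]{Ho3}. For $C \in \maclL(V)$, set $T_C := e^{i\scal {CD_\xi}{D_x}}$, which is a continuous automorphism on $\mascS '(V\times V')$ with $T_C^{-1} = T_{-C}$. By Proposition \ref{Prop:CalculiTransfer}, for every $a \in \mascS '(V\times V')$ we have $\op _A(a) = \op _B(T_{A-B}a)$. Hence the inclusion $\op _A(S(m,g)) \subseteq \op _B(S(m,g))$ is equivalent to $T_{A-B}(S(m,g)) \subseteq S(m,g)$, and the reverse inclusion follows by symmetry (or by using $T_{-C} = T_C^{-1}$). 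The proposition therefore reduces to showing that $T_C(S(m,g)) \subseteq S(m,g)$ for every $C \in \maclL(V)$.

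This stability is exactly what is established in \cite[Theorem 18.5.10]{Ho3} under the standing hypotheses that $g$ is strongly feasible and split and $m$ is $g$-continuous and $(\sigma,g)$-temperate. The idea is to represent $T_C a$ as an oscillatory integral and expand it asymptotically as
\[
T_C a \sim \sum_{k=0}^\infty \frac{i^k}{k!} \scal {CD_\xi}{D_x}^k a,
\]
where each factor $\scal {CD_\xi}{D_x}$ produces a mixed second-order derivative of bidegree $(1,1)$ in $(x,\xi)$. Because $g$ is split, the $x$- and $\xi$-derivatives can be measured independently by $g$, so each application of $\scal {CD_\xi}{D_x}$ yields a gain of $h_g$; hence the $k$-th term lies in $S(h_g^k m,g)$. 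The asymptotic sum then converges to an element of $S(m,g)$ in the sense of the Weyl--H\"ormander calculus, giving the desired invariance.

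The main obstacle is to control the remainders in the asymptotic expansion uniformly and verify that they stay in the correct symbol class; this is precisely the technical content of the proof of Theorem 18.5.10 and rests on stationary-phase estimates adapted to $g$ and $m$. The split hypothesis is essential here, since without it the off-diagonal pieces of the phase $\scal {Cx}{\xi}$ would couple derivatives that $g$ weights differently, so the uniform gain of $h_g$ per term would fail and the expansion would leak out of $S(m,g)$. Given the structural role played by the split condition, the remaining argument is bookkeeping, which explains why the authors leave the details to the reader.
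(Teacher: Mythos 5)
Your proposal is correct and matches the paper's intended argument: the paper simply states that the proposition ``follows from \cite[Theorem 18.5.10]{Ho3} and its proof,'' and your reduction via Proposition \ref{Prop:CalculiTransfer} to the invariance of $S(m,g)$ under $e^{i\scal{CD_\xi}{D_x}}$, followed by an appeal to that theorem and its proof (needed since the theorem as literally stated treats scalar multiples of the identity rather than general $C\in\maclL(V)$), is exactly the omitted detail. Your heuristic on the $h_g$-gain per term and the role of the split hypothesis is consistent with H{\"o}rmander's argument.
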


\par

A combination of Theorem
\ref{Thm:SymbClassEmbdB}, Theorem
\ref{Thm:MainResult2B}, and Proposition
\ref{Prop:CalculiInv} gives the
following. The details are left for
the reader.

\par

\begin{thm}\label{Thm:SymbClassEmbdC2}
Let $A\in \maclL (V)$, $p\in (0,1]$,
$g$ be strongly feasible and split
on $W$, and $m\in L^p(W)$ be a positive
$g$-continuous and $(\sigma ,g)$-temperate
function on $W$.
Then $S(m,g)\subseteq s_{A,p}(W)$.
\end{thm}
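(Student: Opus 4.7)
The plan is to reduce the general $A$-quantization to the Weyl case and then invoke the Schatten inclusion already established for Weyl operators. Since the hypotheses on $g$ (strongly feasible and split) and $m$ ($g$-continuous and $(\sigma,g)$-temperate) are precisely those in Proposition \ref{Prop:CalculiInv}, I would apply that proposition with $B=\frac{1}{2}I_V$ to obtain
$$
\op _A(S(m,g)) = \op ^w(S(m,g)).
$$
Hence, for any given $a\in S(m,g)$, there exists $b\in S(m,g)$ such that $\op _A(a)=\op ^w(b)$.

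Next, I would note that strong feasibility of $g$ implies feasibility, and $m\in L^p(W)$ is positive and $g$-continuous by assumption. These are exactly the hypotheses of Theorem \ref{Thm:SymbClassEmbdB}, which therefore gives $b\in s_p^w(W)$, i.e.\ $\op ^w(b)\in \mascI _p(L^2(V))$. Combining the two steps yields $\op _A(a) = \op ^w(b) \in \mascI _p(L^2(V))$, which by definition says $a\in s_{A,p}(W)$, as required.

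There is no genuine obstacle in this proof, since all the analytical content is packaged into the cited results: the Schatten-$p$ inclusion for the Weyl calculus (Theorem \ref{Thm:SymbClassEmbdB}, which ultimately rests on Theorem \ref{Thm:SymbClassEmbd} and Lemma \ref{Lemma:WeightNormEquiv}) and the calculus-invariance under split strongly feasible metrics (Proposition \ref{Prop:CalculiInv}, in turn a consequence of \cite[Theorem 18.5.10]{Ho3}). The only point worth slight care is emphasising that Proposition \ref{Prop:CalculiInv} delivers not merely that the two sets of operators coincide, but that the transferred symbol $b$ stays inside the same symbol class $S(m,g)$—this is what allows us to feed $b$ back into Theorem \ref{Thm:SymbClassEmbdB} with unchanged weight. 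Theorem \ref{Thm:MainResult2B} is not actually needed for the statement as written, but would be invoked if one sought the analogous refinement asking only that individual symbols satisfy $a\in S(m,g)\cap \WL _g^{1,p}(W)$ with $h_g^{k/2}m\in L^p(W)$.
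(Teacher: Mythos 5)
Your proof is correct and follows exactly the route the paper intends: the paper itself states that Theorem \ref{Thm:SymbClassEmbdC2} is obtained by combining Proposition \ref{Prop:CalculiInv} (to transfer $\op_A(a)$ to a Weyl operator $\op^w(b)$ with $b$ in the \emph{same} class $S(m,g)$) with Theorem \ref{Thm:SymbClassEmbdB}, leaving the details to the reader. Your observation that Theorem \ref{Thm:MainResult2B} is only needed for the companion result, Theorem \ref{Thm:MainResult2C2}, is also accurate.
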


\par

\begin{thm}\label{Thm:MainResult2C2}
Let $A\in \maclL (V)$, $p\in (0,1]$,
$g$ be strongly feasible and split on $W$,
$m$ be a positive $g$-continuous
and $(\sigma ,g)$-temperate function
on $W$ such that $h_g^{k/2}m\in L^p(W)$
for some $k\ge 0$.
Also, suppose $a\in S(m,g)\cap \WL _g^{1,p}(W)$. 
Then $a\in s_{A,p}(W)$.
\end{thm}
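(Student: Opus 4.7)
The plan is to reduce to the Weyl case and apply Theorem \ref{Thm:MainResult2B}. Since $g$ is strongly feasible and split and $m$ is $g$-continuous and $(\sigma ,g)$-temperate, Proposition \ref{Prop:CalculiInv} yields $b\in S(m,g)$ with $\op _A(a)=\op ^w(b)$. The task is therefore reduced to verifying $b\in \WL ^{1,p}_g(W)$, since Theorem \ref{Thm:MainResult2B} then gives $\op ^w(b)\in \mascI _p(L^2(V))$, i.{\,}e., $\op _A(a)\in \mascI _p(L^2(V))$.

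By Proposition \ref{Prop:CalculiTransfer}, $b=e^{i\scal {CD_\xi}{D_x}}a$ with $C=A-\frac 12 I_V$. The standard asymptotic expansion in the Weyl-H\"ormander calculus (available since $g$ is strongly feasible) gives, for any integer $N\ge 1$,
\begin{equation*}
b=\sum _{j=0}^{N-1}\frac 1{j!}T^ja+r_N,\qquad T=i\scal {CD_\xi}{D_x},
\end{equation*}
with each $T^ja\in S(h_g^jm,g)$ and $r_N\in S(h_g^Nm,g)$. The $h_g^j$-gain here follows from the fact that, in symplectic split coordinates diagonalizing $g$, $g_X(\partial _{x_i}),g_X(\partial _{\xi _i})\le h_g(X)$, so each mixed derivative $\partial _{x_i}\partial _{\xi _j}a$ is bounded pointwise by $h_g|a|_2^g\le C h_g m$, and by induction on $j$ the analogous bound holds for $T^ja$.

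Fix an integer $N\ge k/2$. For $j\ge k/2$ (as well as for $r_N$), the pointwise bounds $|T^ja|\le Ch_g^jm\le Ch_g^{k/2}m$ and $|r_N|\le Ch_g^Nm\le Ch_g^{k/2}m$, combined with Lemma \ref{Lemma:WeightNormEquiv} applied to the $g$-continuous weight $h_g^{k/2}m\in L^p(W)$, yield $T^ja,r_N\in \WL ^{1,p}_g(W)$. The leading term $a$ is in $\WL ^{1,p}_g(W)$ by hypothesis.

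The main obstacle is the intermediate range $1\le j<k/2$, where $h_g^jm$ need not belong to $L^p(W)$. Here we borrow the change-of-metric device from the proof of Theorem \ref{Thm:MainResult2}: passing to $G=h_g^{-\alpha }g$ and to the weight $m_0=\sum _{n<N'}|a|_n^G+h_g^{\alpha N'/2}m$ of Lemma \ref{Lemma:m0uppsk} with $\alpha N'\ge k$, we have $a\in S(m_0,G)$ by \cite[Lemma 6.1]{Tof6}, so $|a|_{2j}^g=h_g^{-\alpha j}|a|_{2j}^G\le Ch_g^{-\alpha j}m_0$ and hence $|T^ja|\le Ch_g^{(1-\alpha )j}m_0\le Cm_0$. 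Since $m_0\in \WL ^{1,p}_g(W)$ by Lemma \ref{Lemma:m0uppsk} together with Lemma \ref{Lemma:WeightNormEquiv}, this gives $T^ja\in \WL ^{1,p}_g(W)$ also in this intermediate range, with $\nm {T^ja}{\WL ^{1,p}_g}$ controlled in terms of $\nm a{\WL ^{1,p}_g}$ and $\nm {h_g^{k/2}m}{L^p}$. Assembling all pieces yields $b\in \WL ^{1,p}_g(W)$, and Theorem \ref{Thm:MainResult2B} completes the proof.
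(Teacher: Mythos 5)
Your overall architecture is the one the paper intends: the paper's own proof is a one-line combination of Theorem \ref{Thm:SymbClassEmbdB}, Theorem \ref{Thm:MainResult2B} and Proposition \ref{Prop:CalculiInv} with ``details left for the reader'', and you supply those details along exactly this route --- reduce to the Weyl case via Proposition \ref{Prop:CalculiTransfer}/\ref{Prop:CalculiInv}, expand the transformed symbol $b=e^{i\scal{(A-\frac 12 I_V)D_\xi}{D_x}}a$, control the high-order terms and the remainder through Lemma \ref{Lemma:WeightNormEquiv}, and control the intermediate terms $1\le j<k/2$ through the change of metric $G=h_g^{-\alpha}g$ and Lemma \ref{Lemma:m0uppsk}. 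All of these steps are carried out correctly \emph{granted} the term-by-term gain $T^ja\in S(h_g^jm,g)$, $r_N\in S(h_g^Nm,g)$, on which the whole decomposition rests.

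That gain is the one step you have not actually established, and your justification of it is incorrect as written. You assert that ``in symplectic split coordinates diagonalizing $g$'' one has $g_X(\partial _{x_i}),g_X(\partial _{\xi _i})\le h_g(X)$. This conflates two different coordinate systems: the coordinates in which $g_X=\sum \lambda _j(X)(z_j^2+\zeta _j^2)$ (where indeed every coordinate direction has $g_X$-length $\le \lambda _1(X)=h_g(X)$) depend on $X$, whereas $T=i\scal{CD_\xi}{D_x}$ has constant coefficients in the fixed global split coordinates, and in those coordinates splitness only excludes cross terms between $y$ and $\eta$; it does not bound $g_X(e_i)$ by $h_g(X)$. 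Concretely, for the constant split metric $g(y,\eta )=y_1^2+\delta ^2y_2^2+\delta ^2\eta _1^2+\eta _2^2$ on $T^*\rr 2$ one has $h_g=\delta$, yet $D_{x_1}D_{\xi _2}$ applied to $a=f(x_1+\xi _2)\in S(1,g)$ yields a symbol of size $1$, not $\delta$; so an off-diagonal pair $D_{x_i}D_{\xi _j}$, $i\neq j$, carries no $h_g$-gain. What saves the scalar case $A=tI_V$ is that $\scal{D_\xi}{D_x}=\sum _iD_{\xi _i}D_{x_i}$ is the invariant contraction, which may be evaluated in a basis adapted to $g_X$ where the relevant products of lengths are the $\lambda _j(X)^2\le h_g(X)^2$; this is the content of Theorem 18.5.10 in \cite{Ho3}. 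Your proof is therefore complete for $A=tI_V$ (in particular for the Kohn--Nirenberg and Weyl quantizations), but for a general $A\in \maclL (V)$ the expansion step needs a genuine additional argument or a preliminary reduction of $A$ --- and you should be aware that this is precisely the delicate point already hidden inside Proposition \ref{Prop:CalculiInv}, which the paper likewise states without proof.
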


\par

%%%%%%%%%%%%%%%%%%%%%%%%%%%%%%%%%%
\section{Applications to
special families of
pseudo-differential operators}\label{sec4}
%%%%%%%%%%%%%%%%%%%%%%%%%%%%%%%%%%

\par

In this section we apply the results from
previous sections to obtain Schatten-von
Neumann properties for pseudo-differential
operators with symbols in the well-known
Shubin classes and SG classes 
(see \cite{Sh}).
We first recall their definitions.
Here, let
$$
\eabs x = (1+|x|^2)^{\frac 12},
\qquad
x\in \rr d.
$$

\par

\begin{defn}\label{Def:SpecSymbClasses}
Let $r,\rho \in \mathbf R$.
\begin{enumerate}
\item The \emph{Shubin class}
$\Sh ^r(\rr d)$ is the set of all
$f\in \mascC ^\infty (\rr d)$ such that
$$
|D^\alpha f(x)|
\le
C_\alpha \eabs x^{r-|\alpha |},
\qquad x\in \rr d.
$$

\vrum

\item The \emph{SG class}
$\SG ^{r,\rho}(\rr {2d})$ is the set of all
$a\in \mascC ^\infty (\rr {2d})$ such that
$$
|D_x^\alpha D_\xi ^\beta a(x,\xi )|
\le
C_{\alpha ,\beta} \eabs x^{r-|\alpha |}
\eabs \xi ^{\rho-|\beta |},
\quad
x,\xi \in \rr d.
$$
\end{enumerate}
\end{defn}

\par

\begin{rem}\label{Rem:SymbClassObs}
Let $p\in (0,1]$. For the symbol classes in 
Definition \ref{Def:SpecSymbClasses},
we observe the following:
% For the symbol classes in Definition
% \ref{Def:SpecSymbClasses}, we have the
% following.
\begin{enumerate}
\item
if $r\in\mathbf{R}$,
then $S(m,g) = \Sh ^r(\rr {2d})$ when
\begin{equation}\label{Eq:ShubMetric}
g_{x,\xi} (y,\eta) = \frac {|y|^2+|\eta |^2}
{\eabs{(x,\xi )}^2}
\quad \text{and}\quad
m(x,\xi )= \eabs{(x,\xi )}^r.
\end{equation}

Furthermore, $h_g(x,\xi )= \eabs{(x,\xi )}^{-2}$ and
$$
h_g^{k/2}m \in L^p(\rr {2d}),
\quad \text{when}\quad
k>r+\frac {2d}p\text ;
$$

\vrum

\item if $r,\rho \in\mathbf{R}$,
then $S(m,g) = \SG ^{r,\rho}(\rr {2d})$ when
\begin{equation}\label{Eq:SGMetric}
g_{x,\xi} (y,\eta) = \frac {|y|^2}
{\eabs x^2}+\frac {|\eta |^2}
{\eabs \xi ^2}
\quad \text{and}\quad
m(x,\xi )= \eabs {x}^r\eabs \xi ^{\rho}.
\end{equation}
Furthermore, $h_g(x,\xi )= (\eabs{x}\eabs \xi )^{-1}$
%In particular, if $p\in (0,1]$, then
and
$$
h_g^{k/2}m \in L^p(\rr {2d}),
\quad \text{when}\quad
k>2\max(r,\rho )+\frac {2d}p.
$$
% %%
% \begin{align*}
% \Sh ^r(\rr {2d}) &= S(m,g)
% \quad \text{when}\quad
% g_{x,\xi} (y,\eta) = \frac {|y|^2+|\eta |^2}
% {\eabs{(x,\xi )}^2},
% \quad
% x,y,\xi ,\eta \in \rr d,
% \intertext{and}
% \SG ^{r,\rho}(\rr {2d}) &= S(m,g)
% \quad \text{when}\quad
% g_{x,\xi} (y,\eta) = \frac {|y|^2}
% {\eabs x^2}+\frac {|\eta |^2}
% {\eabs \xi ^2},
% \quad
% x,y,\xi ,\eta \in \rr d.
% \end{align*}
% %%
\end{enumerate}

In both (i) and (ii),
$g$ is strongly feasible and $m$ is
$g$-continuous and $(\sigma ,g)$-temperate.
\end{rem}

\par

In the next result we show how Lemma 
\ref{Lemma:WeightNormEquiv}
and Theorem \ref{Thm:MainResult2C2} can be combined 
with Remark \ref{Rem:SymbClassObs}, in order to 
obtain quasi-Banach Schatten-von Neumann properties 
for the Shubin classes and the SG classes.

% By combining Lemma \ref{Lemma:WeightNormEquiv}
% and Theorem \ref{Thm:MainResult2C2} with
% the observations in Remark \ref{Rem:SymbClassObs},
% we conclude with the next

\par

\begin{prop}
Let $p\in (0,1]$, $A$ be a real $d\times d$-matrix,
and $r,\rho \in \mathbf R$. Then the
following is true:
\begin{enumerate}
\item if $g$ is given by \eqref{Eq:ShubMetric},
then
$$
\Sh ^r(\rr {2d})\cap \WL ^{1,p}_g(\rr {2d})
\subseteq s_{A,p}(\rr {2d})\text ;
$$

\vrum

\item if $g$ is given by \eqref{Eq:SGMetric},
then
$$
\SG ^{r,\rho}(\rr {2d})\cap \WL ^{1,p}_g(\rr {2d})
\subseteq s_{A,p}(\rr {2d}).
$$
\end{enumerate}
\end{prop}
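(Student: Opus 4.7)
The plan is to interpret each of (i) and (ii) as an instance of Theorem \ref{Thm:MainResult2C2}, using Remark \ref{Rem:SymbClassObs} to identify the Shubin and SG classes with H{\"o}rmander classes $S(m,g)$ and to verify the analytic hypotheses on $g$, $m$, and $h_g^{k/2}m$.

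For part (i), I would set $m(x,\xi) = \eabs{(x,\xi)}^r$ and take $g$ as in \eqref{Eq:ShubMetric}. By Remark \ref{Rem:SymbClassObs}(i) we have $\Sh^r(\rr{2d}) = S(m,g)$, and $g$ is strongly feasible while $m$ is $g$-continuous and $(\sigma,g)$-temperate. I still need to check that $g$ is \emph{split}: but since $g_{x,\xi}(y,\eta)$ depends on $(y,\eta)$ only through $|y|^2+|\eta|^2$, the equality $g_X(y,-\eta)=g_X(y,\eta)$ is immediate in the standard symplectic coordinates. Finally, the Remark supplies $k>r+2d/p$ such that $h_g^{k/2}m \in L^p(\rr{2d})$. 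Given then any $a\in \Sh^r(\rr{2d})\cap \WL_g^{1,p}(\rr{2d}) = S(m,g)\cap \WL_g^{1,p}(\rr{2d})$, Theorem \ref{Thm:MainResult2C2} applies and yields $a\in s_{A,p}(\rr{2d})$.

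For part (ii), the plan is identical with $m(x,\xi) = \eabs{x}^r\eabs{\xi}^\rho$ and $g$ as in \eqref{Eq:SGMetric}, using Remark \ref{Rem:SymbClassObs}(ii) to identify $\SG^{r,\rho}(\rr{2d})$ with $S(m,g)$ and to secure the $g$-continuity, temperance, and strong feasibility. The split property again reduces to observing that $g_{x,\xi}(y,\eta) = |y|^2/\eabs{x}^2 + |\eta|^2/\eabs{\xi}^2$ is even in $\eta$. The integrability hypothesis $h_g^{k/2}m \in L^p(\rr{2d})$ follows from the Remark upon choosing $k > 2\max(r,\rho)+2d/p$. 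A second application of Theorem \ref{Thm:MainResult2C2} then gives $a\in s_{A,p}(\rr{2d})$ for every $a\in \SG^{r,\rho}(\rr{2d})\cap \WL_g^{1,p}(\rr{2d})$.

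There is essentially no hard step here: the proof is a bookkeeping argument that collects the ingredients already established, and the main (mild) point is to verify that the Shubin and SG metrics are split, which is transparent from their coordinate expressions. No additional lemmas are required.
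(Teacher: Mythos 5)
Your proposal is correct and follows essentially the same route the paper intends: the paper explicitly presents this proposition as a combination of Theorem \ref{Thm:MainResult2C2} with Remark \ref{Rem:SymbClassObs} (together with Lemma \ref{Lemma:WeightNormEquiv}, which is already absorbed into the hypotheses of that theorem), leaving the bookkeeping to the reader. Your verification of the split property and the choice of $k$ making $h_g^{k/2}m\in L^p$ are exactly the details that need to be checked, and they are checked correctly.
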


\par

\end{document}